\newtheorem{theo}{Theorem}[section]
\newtheorem{prop}[theo]{Proposition}
\newtheorem{lem}[theo]{Lemma}
\newtheorem{cor}[theo]{Corollary}
\newtheorem{rmk}[theo]{Remark}
\newtheorem*{nt}{Notation}
\newtheorem{prob}{Problem}
\newtheorem{case}{Case}
\title[characteristic polynomials of Abelian varieties]{The Characteristic Polynomials Of Abelian Varieties Of  Higher Dimension Over Finite Fields}
\author{DAIKI HAYASHIDA}\thanks{hayashid@math.kyoto-u.ac.jp\\\ \ \ \ Department of Mathematics, Faculty of Science, Kyoto University\\\ \ \ \ Kitashirakawa Oiwake-cho, Sakyo-ku, Kyoto 606-8502, Japan}
\begin{document}

 \maketitle
 
 \begin{abstract}
The characteristic polynomials of abelian varieties over the finite field $\mathbb{F}_q$ with $q=p^n$ elements have a lot of arithmetic and geometric information. They have been explicitly described for abelian varieties up to dimension 4, but little is known in higher dimension. In this paper, among other things, we obtain the following three results on the characteristic polynomial of abelian varieties. First, we prove a relation between $n$ and $e$, where $e$ is a certain multiplicity associated with a simple abelian variety of arbitrary dimension over $\mathbb{F}_q$. Second, we explicitly describe the characteristic polynomials of simple abelian varieties of arbitrary dimension $g$, when $e=g$. Finally, we explicitly describe the coefficients of characteristic polynomials of abelian varieties of dimension 5 over $\mathbb{F}_q$.
 \end{abstract}
 
 
 \section{Introduction}
 An abelian variety over a field $k$ is a complete group variety over $k$. It has various extremely good properties. In particular, abelian varieties over finite fields can be used to approach various practical issues, e.g. cryptography. Since the characteristic polynomial has a lot of information on abelian varieties, it is important to investigate the characteristic polynomials of abelian varieties in detail.

Let $\mathbb{F}_q$ be a finite field with $q=p^n$ elements and $X$ an abelian variety of dimension $g$ over $\mathbb{F}_q$. Fix a prime $l \neq p$ and let $T_l (X)$ be the $l$-adic Tate module of $X$. The $q$-th power Frobenius endomorphism $\pi_X : X \rightarrow X$ induces a homomorphism as $\mathbb{Z}_l$-modules
$$
T_l (\pi_X) : T_l (X) \longrightarrow T_l (X).
$$
The characteristic polynomial $f_X (t)$ of $\pi_X$ is defined by
$$
f_X (t) = {\rm det}(t-T_l (\pi_X)) ,
$$
which is known to have coefficients in $\mathbb{Z}$ independent of $l$. In this paper, we call $f_X (t)$ the characteristic polynomial of $X$ or the characteristic polynomial for short instead of the characteristic polynomial of the Frobenius endomorphism of $X$. The characteristic polynomial $f_X (t)$ is of the form
$$
f_X (t) = t^{2g} + a_1 t^{2g-1} + \cdots + a_{g-1} t^{g-1} + a_g t^g + a_{g-1} q t^{g-1} + \cdots + a_1 q^{g-1} t + q^g,
$$
where $a_1 , \cdots , a_g \in \mathbb{Z}$. The set of roots in $\mathbb{C}$ of $f_X (t)$ is in the form $\{ w_1 , \overline{w_1} , \cdots , w_g , \overline{w_g} \}$, where $w_i$ is a Weil number for $i=1,\cdots , g$. A ($q$-){\it Weil number $w$} is an algebraic integer such that for any embedding $\sigma : \mathbb{Q} (w) \hookrightarrow \mathbb{C}$, $|\sigma(w)|=\sqrt{q}$. A monic polynomial with integer coefficients whose roots are ($q$-)Weil numbers is called a ($q$-)${\it Weil\ polynomial}$. Thus the characteristic polynomial of the Frobenius endomorphism is a Weil polynomial, but the converse is not necessarily true.

Let $X,Y$ be abelian varieties defined over $\mathbb{F}_q$ and $f_X (t) , f_Y (t)$ the characteristic polynomials of $X$ and $Y$ respectively. Then, by Tate's theorem, $X$ is ($\mathbb{F}_q$-)isogenous to $Y$ if and only if $f_X (t) = f_Y (t)$. It is known that any abelian variety $X$ over $\mathbb{F}_q$ is isogenous to
$$
X_1^{r_1} \times \cdots \times X_m^{r_m},
$$
where $X_i$ is a simple abelian variety over $\mathbb{F}_q$, $X_i$ is not isogenous to $X_j$ for $i \neq j$ and $r_i \geq 1$ is an integer. If $f_{X_i}(t)$ is the characteristic polynomial of $X_i$, then 
$$
f_X (t) = f_{X_1} (t)^{r_1} \cdots f_{X_m} (t)^{r_m}.
$$
Therefore to determine characteristic polynomials of abelian varieties of dimension $g$ over finite fields, it is sufficient to determine characteristic polynomials of simple abelian varieties of dimension $\leq g$. Moreover, if $X$ is ($\mathbb{F}_q$-)simple, then $f_X (t) = m_X (t) ^e$, where $m_X (t)$ is an irreducible polynomial and $e \geq 1$ is an integer, which we call the multiplicity of $X$. From this equality, it is obvious that $e$ divides $2{\rm dim}(X)$. 

In this paper, we show three main results on characteristic polynomials of abelian varieties. First, we give a new condition on $e$ as follows.

\begin{theo}\label{theo1.1}
With the above notation, $e$ divides $n$ except for the case where $f_X (t)$ has a real root.
\end{theo}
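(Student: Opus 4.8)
The plan is to derive Theorem~\ref{theo1.1} from Honda--Tate theory. Write $\pi=\pi_X$ and $K=\mathbb{Q}(\pi)\subset\overline{\mathbb{Q}}$. Since $X$ is simple, Tate's theorem tells us that $K$ is a number field, that $m_X(t)$ is the minimal polynomial of $\pi$ over $\mathbb{Q}$, and that the endomorphism algebra $E:=\mathrm{End}^0_{\mathbb{F}_q}(X)$ is a central division algebra over $K$ whose Schur index is precisely the integer $e$ appearing in $f_X(t)=m_X(t)^e$; equivalently $[E:K]=e^2$ and $2g=e\,[K:\mathbb{Q}]$. So everything reduces to controlling the Schur index of $E$.

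Next I would use that over a number field the Schur index of a central simple algebra coincides with the exponent of its class in the Brauer group (the Albert--Brauer--Hasse--Noether theorem), and that this exponent equals the least common multiple, over all places $v$ of $K$, of the orders in $\mathbb{Q}/\mathbb{Z}$ of the local invariants $\mathrm{inv}_v(E)$. As the least common multiple of a family of divisors of $n$ is again a divisor of $n$, the theorem is reduced to the following: if $f_X(t)$ has no real root, then the denominator of each $\mathrm{inv}_v(E)$ divides $n$.

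For this I would invoke Tate's explicit description of the invariants: $\mathrm{inv}_v(E)=0$ if $v$ is complex or a finite place not lying above $p$; $\mathrm{inv}_v(E)=\tfrac12$ if $v$ is real; and $\mathrm{inv}_v(E)\equiv \frac{v(\pi)}{v(q)}\,[K_v:\mathbb{Q}_p]\pmod{\mathbb{Z}}$ if $v\mid p$, where $v$ is normalized so that $v(K_v^\times)=\mathbb{Z}$. A real place of $K$ would send $\pi$ to a real number of absolute value $\sqrt q$, forcing $\pi^2=q$ in $K$ and hence a real root $\pm\sqrt q$ of $m_X(t)$, so of $f_X(t)$; thus under our hypothesis $K$ has no real place and only the primes above $p$ remain. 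For $v\mid p$, set $e_v=e(K_v/\mathbb{Q}_p)$ and $f_v=f(K_v/\mathbb{Q}_p)$; then $v(q)=v(p^n)=n\,e_v$ while $[K_v:\mathbb{Q}_p]=e_vf_v$, so $\mathrm{inv}_v(E)\equiv v(\pi)f_v/n\pmod{\mathbb{Z}}$, whose denominator divides $n$. Hence $e\mid n$. (The exclusion is genuine: for $n$ odd the Weil number $\pi=\sqrt q$ gives a simple abelian surface with $f_X(t)=(t^2-q)^2$ and $e=2\nmid n$.)

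I do not anticipate a deep obstacle; the argument is essentially a careful reading of the Honda--Tate invariants. The points that require attention are the normalization of $v$ in Tate's $p$-adic formula, together with the identity $v(q)=n\,e_v$, and the observation that the ``real root'' case is exactly the situation in which $K$ acquires a real place --- this being the one and only mechanism that can produce a local invariant whose denominator (necessarily $2$) fails to divide $n$.
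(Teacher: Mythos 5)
Your proposal is correct and follows essentially the same route as the paper: reduce to the statement that $e$ equals the least common denominator of the local invariants of $E$ (the paper's Lemma~\ref{lem3.1}, via Proposition~\ref{prop2.2} and the index--exponent equality over number fields), rule out real places using the hypothesis that $f_X(t)$ has no real root, and check that each invariant at a place above $p$ has denominator dividing $n$. Your computation $\mathrm{inv}_v(E)\equiv v(\pi)f_v/n$ with the normalized valuation is just a different bookkeeping of the paper's observation that $\mathrm{inv}_v(E)=v_p(m_i(0))/n$ with $v_p(m_i(0))\in\mathbb{Z}$.
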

Many studies so far of characteristic polynomials of simple abelian varieties over $\mathbb{F}_q$ are done with $n$ fixed, where the possibility of $e$ is examined only by using the divisibility condition $e \mid 2{\rm dim}(X)$ unrelated to the value of $n$. On the other hand, Theorem \ref{theo1.1} gives rise to another direction of study depending on the value of $n$. For example, assume $q=p$, then $n=1$, hence $e=1$ by Theorem \ref{theo1.1}. Thus we conclude that the characteristic polynomial of a simple abelian variety over $\mathbb{F}_p$ must be irreducible, unless it has a real root.

The second main result is the following theorem generalizing the known results \cite[Propositions 2, 3]{Xi} for dimensions 3 and 4 to arbitrary dimension. 

\begin{theo}\label{theo1.2}
Let $a, b \in \mathbb{Z}$ and $2<g \in \mathbb{Z}$. Set $f(t) = (t^2 + at + b)^g \in \mathbb{Z}[t]$. Then the polynomial $f(t)$ is the characteristic polynomial of a simple abelian variety of dimension $g$ over $\mathbb{F}_q$ with $q=p^n$ elements if and only if $g$ divides $n$, $b=q$, $|a|< 2\sqrt{q}$ and $a= kq^{s/g}$, where $k,s$ are integers satisfying $(k,p)=1$, $(s,g)=1$ and $1\leq s < g/2$.
\end{theo}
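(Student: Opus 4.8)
The plan is to invoke the Honda--Tate classification of simple abelian varieties over $\mathbb{F}_q$ by Galois-conjugacy classes of Weil numbers, together with Tate's formula for the local invariants of the endomorphism algebra $D=\operatorname{End}^0(X)$: its centre is $E=\mathbb{Q}(\pi)$, one has $2\dim X=e\,[E:\mathbb{Q}]$ and $f_X(t)=m_X(t)^{e}$, and $e$ (the multiplicity, equal to the degree of $D$ over $E$) is the least common multiple of the denominators of the invariants $\operatorname{inv}_v(D)\in\mathbb{Q}/\mathbb{Z}$.

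\emph{Reduction.} Write $h(t)=t^2+at+b$. First I would dispose of the case that $h$ is reducible over $\mathbb{Q}$: then $h(t)=(t-\alpha)(t-\beta)$ with $\alpha,\beta\in\mathbb{Z}$, and since $f(t)=(t-\alpha)^g(t-\beta)^g$ must be a power of an irreducible polynomial one is forced to have $\alpha=\beta$, $m_X(t)=t-\alpha$, $e=2g$; but then $\pi=\alpha$ is a rational Weil number, so $\alpha=\pm\sqrt q$, the field $E=\mathbb{Q}$ has a real place, the invariants of $D$ are $1/2$ at $\infty$ and at $p$ and $0$ elsewhere, whence $e=2$, contradicting $e=2g$ since $g>2$. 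So $h$ is irreducible, $m_X(t)=h(t)$ and $e=g$; its roots are then non-real Weil numbers, which gives $b=\pi\overline{\pi}=q$ and $|a|=|\pi+\overline{\pi}|<2|\pi|=2\sqrt q$. Thus in both directions we are reduced to a Weil number $\pi$ with $\pi^2+a\pi+q=0$, $|a|<2\sqrt q$, with $E=\mathbb{Q}(\pi)=\mathbb{Q}(\sqrt{a^2-4q})$ imaginary quadratic, and to relating the splitting of $p$ in $E$, the quantity $v_p(a)$, and the multiplicity $e$.

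\emph{The local dichotomy and the ``only if'' direction.} Since $E$ is imaginary quadratic it has no real place, so the only possibly nonzero invariants of $D$ are at places $v\mid p$, where $\operatorname{inv}_v(D)\equiv \tfrac{v(\pi)}{v(q)}[E_v:\mathbb{Q}_p]\pmod{\mathbb{Z}}$ ($v$ normalized). If $p$ were inert or ramified in $E$ there would be a single $v\mid p$, fixed by complex conjugation, so $v(\pi)=v(\overline{\pi})$, $2v(\pi)=v(q)$, and with $[E_v:\mathbb{Q}_p]=2$ we get $\operatorname{inv}_v(D)\equiv1\equiv0$, i.e. $e=1$, impossible. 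Hence $p$ splits, $p=\mathfrak p_1\mathfrak p_2$ with $\mathfrak p_2=\overline{\mathfrak p_1}$; the completions are $\mathbb{Q}_p$, $v_i(q)=n$, and complex conjugation exchanges $v_1,v_2$, so $v_1(\pi)+v_2(\pi)=v_1(\pi\overline{\pi})=n$. If $v_1(\pi)=v_2(\pi)=n/2$ then $\operatorname{inv}_{v_i}(D)\equiv1/2$ and $e=2$, impossible since $g>2$; hence $v_1(\pi)\ne v_2(\pi)$, and writing $x=\min(v_1(\pi),v_2(\pi))<n/2$ one gets $v_p(a)=v_p(\pi+\overline{\pi})=x$ while $\operatorname{inv}_{v_i}(D)\equiv\pm x/n$, so $e=n/\gcd(x,n)$. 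Setting $g=e$ yields $\gcd(x,n)=n/g=:d$; thus $g\mid n$, $d\mid x$, and writing $x=ds$ one gets $\gcd(s,g)=1$ and $1\le s<g/2$. Since $x=v_p(a)$ this says exactly $a=k\,p^{ds}=k\,q^{s/g}$ with $(k,p)=1$, which is the asserted form.

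\emph{The converse.} Conversely, given such $a,b$ put $\pi=\tfrac12(-a+\sqrt{a^2-4q})$. From $a=k q^{s/g}=k\,p^{ds}$ with $d=n/g$ one computes $a^2-4q=p^{2ds}(k^2-4p^{\,n-2ds})$, where $n-2ds=d(g-2s)\ge1$, so the unit part $u=k^2-4p^{\,n-2ds}$ satisfies $u\equiv k^2\pmod p$ when $p$ is odd and $u\equiv1\pmod 8$ when $p=2$; in either case $u$ is a square in $\mathbb{Q}_p$, hence so is $a^2-4q$, and $p$ splits in $E=\mathbb{Q}(\sqrt{a^2-4q})$. The same valuation bookkeeping as above gives $\{v_1(\pi),v_2(\pi)\}=\{ds,\,n-ds\}$, hence $\operatorname{inv}_{v_1}(D)\equiv ds/n=s/g\pmod{\mathbb{Z}}$ has exact denominator $g$ (as $(s,g)=1$), so the Weil number $\pi$ has multiplicity $e=g$. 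By Honda--Tate there is a simple abelian variety $X/\mathbb{F}_q$ with Frobenius $\pi$; it satisfies $2\dim X=e\,[E:\mathbb{Q}]=2g$ and $f_X(t)=m_X(t)^{e}=(t^2+at+q)^g=f(t)$, as required.

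\emph{Main obstacle.} The delicate part is the local analysis: eliminating the inert and ramified cases, proving that $p$ splits, and showing that there is no unexpected cancellation in $v_p(\pi+\overline{\pi})$, so that $\{v_1(\pi),v_2(\pi)\}$ is exactly $\{ds,n-ds\}$. This is where the inequality $s<g/2$ (equivalently $v_p(a^2-4q)=2v_p(a)<n$) enters, and where the prime $p=2$ requires extra care, since squareness in $\mathbb{Q}_2$ is detected by congruences modulo $8$ rather than modulo $p$.
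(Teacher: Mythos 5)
Your argument runs on the same machinery as the paper's (Honda--Tate plus Tate's formula for the local invariants, and the identification of $e$ with the least common denominator of the invariants of $\operatorname{End}^0(X)$), but there is one unjustified step in your reduction: after concluding that $h(t)=t^2+at+b$ is irreducible over $\mathbb{Q}$ you assert that ``its roots are then non-real Weil numbers,'' and you use this to deduce $b=\pi\overline{\pi}=q$. Irreducibility does not exclude real roots: $h(t)=t^2-q$ with $n$ odd is irreducible over $\mathbb{Q}$, has the real Weil numbers $\pm\sqrt{q}$ as roots, and gives $b=-q$ --- precisely the conclusion you are trying to rule out. You must exclude this case separately, e.g.\ by Lemma \ref{lem2.4} (a simple abelian variety whose Frobenius is a real Weil number has dimension at most $2$, contradicting $g>2$), or within your own framework by noting that for $\pi=\sqrt{q}$ the centre is the real quadratic field $\mathbb{Q}(\sqrt{p})$, the invariants are $1/2$ at the two real places and $0$ at the unique place above $p$, so $e=2\neq g$. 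This is the same kind of computation you already carry out for the rational Weil number case, so the gap is easily repaired, but as written the claim $b=q$ is not established.

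Otherwise your proof is correct and differs from the paper's only in execution. For the ``if'' direction the paper simply quotes Lemma \ref{lem2.6} to read off $\dim X=n/(v_p(a),n)=g$, whereas you reprove the relevant case by computing the invariants directly; your route is self-contained but obliges you to verify that $p$ splits in $\mathbb{Q}(\sqrt{a^2-4q})$ via the discriminant, with the separate mod $8$ analysis at $p=2$, while the paper's Newton-polygon argument (two distinct slopes $v_p(a)<n/2<n-v_p(a)$ force a factorization into linear factors over $\mathbb{Q}_p$) yields the same splitting uniformly in $p$. For the ``only if'' direction you recover $g\mid n$ directly from $e=n/\gcd(v_p(a),n)$ rather than invoking Theorem \ref{theo1.1}; the content is the same, and your treatment of the case of a single place above $p$ and of the equal-valuation case $v_1(\pi)=v_2(\pi)=n/2$ matches the paper's exclusion of the ``Newton polygon is a line'' configuration.
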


This theorem was known for dimension $g$ less than or equal to $4$ (cf. Remark \ref{rmk3.4}).

The third main result is the following theorem, which explicitly describes the characteristic polynomials of simple abelian varieties of dimension 5 over $\mathbb{F}_q$. (See the end of section 2 for the known results in dimension $\leq 4$.) Let $v_p$ denote the $p$-adic additive valuation normalized as $v_p (p) =1$.

\begin{theo}\label{theo1.3}
Let $f(t)= t^{10}+a_1 t^9 + a_2 t^8 + a_3 t^7 + a_4 t^6 + a_5 t^5 + a_4 qt^4 + a_3 q^2 t^3 + a_2 q^3 t^2 + a_1 q^4 t + q^5$ be a Weil polynomial. Then $f(t)$ is the characteristic polynomial of a simple abelian variety of dimension 5 over $\mathbb{F}_q$ if and only if one of the following conditions holds:
\begin{itemize}
\item[(I)] The polynomial $f(t)$ is of the form $f(t)=(t^2 + at + q)^5$, where $a\in \mathbb{Z}$, $|a| < 2\sqrt{q}$ and the following condition holds:
      \begin{itemize}
           \item[(1)] 5 divides $n$ and $a= kq^{s/5}$, where $k$ is an integer, $(k,p)=1$ and $s$ is $1$ or $2$.
      \end{itemize}
\item[(I\hspace{-.1em}I)] The polynomial $f(t)$ is irreducible and one of the following conditions holds:
\begin{itemize}
\item[(2)] $v_p (a_1)=0$, $v_p (a_2) \geq n/2$, $v_p (a_3) \geq n$, $v_p (a_4) \geq 3n/2$, $v_p (a_5) \geq 2n$ and $f(t)$ has no root of valuation $n/2$ nor a factor of degree $3$ in $\mathbb{Q}_p$,
\item[(3)] $v_p (a_1) =0$, $v_p (a_2) \geq n/3$, $v_p (a_3) \geq 2n/3$, $v_p (a_4) =n$, $v_p (a_5) \geq 3n/2$ and $f(t)$ has no root of valuation $n/3,n/2$ or $2n/3$ in $\mathbb{Q}_p$, 
\item[(4)] $v_p (a_1) =0$, $v_p (a_2) \geq n/4$, $v_p (a_3) \geq n/2$, $v_p (a_4) \geq 3n/4$, $v_p (a_5) = n$ and $f(t)$ has no root of valuation $n/4$ or $3n/4$ nor an irreducible factor of degree $2$ in $\mathbb{Q}_p$,
\item[(5)] $v_p (a_2) =0$, $v_p (a_3) \geq n/2$, $v_p (a_4) \geq n$, $v_p (a_5) \geq 3n/2$ and $f(t)$ has no root of valuation $n/2$ nor an irreducible factor of degree $3$ in $\mathbb{Q}_p$,
\item[(6)] $v_p (a_2) = 0$, $v_p (a_3) \geq n/3$, $v_p (a_4) \geq 2n/3$, $v_p (a_5) =n$ and $f(t)$ has no root of valuation $n/3$ or $2n/3$ in $\mathbb{Q}_p$, 
\item[(7)] $v_p (a_3) = 0$, $v_p (a_4) \geq n/2$, $v_p (a_5) \geq n$ and $f(t)$ has no root of valuation $n/2$ in $\mathbb{Q}_p$, 
\item[(8)] $v_p (a_1) \geq n/3$, $v_p (a_2 ) \geq 2n/3$, $v_p (a_3)=n$, $v_p (a_4) \geq 3n/2$, $v_p (a_5) \geq 2n$ and $f(t)$ has no root of valuation $n/3$, $n/2$ or $2n/3$ in $\mathbb{Q}_p$, 
\item[(9)] $v_p (a_4) = 0$, $v_p (a_5) \geq n/2$ and $f(t)$ has no root of valuation $n/2$ in $\mathbb{Q}_p$,
\item[(10)] $v_p (a_1) \geq n/4$, $v_p (a_2 ) \geq n/2$, $v_p (a_3)\geq 3n/4$, $v_p (a_4)=n$, $v_p (a_5) \geq 3n/2$ and $f(t)$ has no root of valuation $n/4, n/2$ or $3n/4$ and has two irreducible factors of degree $4$ in $\mathbb{Q}_p$,
\item[(11)] $v_p (a_5) = 0$,
\item[(12)] $v_p (a_1) \geq n/5$, $v_p (a_2 ) \geq 2n/5$, $v_p (a_3)\geq 3n/5$, $v_p (a_4)\geq 4n/5$, $v_p (a_5) =n$ and $f(t)$ has two irreducible factors of degree $5$ in $\mathbb{Q}_p$,
\item[(13)] $v_p (a_1) \geq 2n/5$, $v_p (a_2 ) \geq 4n/5$, $v_p (a_3)\geq 6n/5$, $v_p (a_4)\geq 8n/5$, $v_p (a_5) =2n$ and $f(t)$ has two irreducible factors of degree $5$ in $\mathbb{Q}_p$,
\item[(14)] $v_p (a_1)\geq n/2$, $v_p (a_2) \geq n$, $v_p (a_3) \geq 3n/2$, $v_p (a_4) \geq 2n$, $v_p (a_5) \geq 5n/2$ and $f(t)$ has no root of valuation $n/2$ nor a factor of degree $3$ or $5$ in $\mathbb{Q}_p$.

\end{itemize}
\end{itemize}

\end{theo}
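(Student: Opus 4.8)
The plan is to reformulate the statement via Honda--Tate theory and then carry out a Newton polygon analysis over $\mathbb{Q}_p$.

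\emph{Reduction.} Let $\pi$ be a root of $f(t)$; it is a $q$-Weil number, so by Honda--Tate it is the Frobenius of a simple abelian variety $X_\pi/\mathbb{F}_q$ with $f_{X_\pi}(t)=m_\pi(t)^{e}$, where $m_\pi$ is the minimal polynomial of $\pi$, the integer $e=e(\pi)$ is the common denominator of the local invariants of the division algebra $D=\mathrm{End}^0(X_\pi)$ over $K=\mathbb{Q}(\pi)$, and $\dim X_\pi=\tfrac12\,e\,[K:\mathbb{Q}]$. Hence $f$ is the characteristic polynomial of a simple abelian variety of dimension $5$ if and only if $f=m_\pi^{e}$ with $e\,\deg m_\pi=10$. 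Since $K$ is CM or totally real, and a totally real Weil number is $\pm\sqrt q$ of degree $\le 2$, we have $\deg m_\pi\in\{1,2,10\}$; one checks $\deg m_\pi=1$ forces $e(\pi)=2$ (the supersingular elliptic curve) so this is excluded, while in the cases $\deg m_\pi=2$ and $\deg m_\pi=10$ being a characteristic polynomial of a simple $5$-fold requires respectively $e(\pi)=5$ (with $f=m_\pi^5$, $|a|<2\sqrt q$ forcing $m_\pi=t^2+at+q$ irreducible) and $e(\pi)=1$ (with $f$ irreducible). The first of these is exactly Theorem~\ref{theo1.2} with $g=5$, which yields part (I) and condition~(1). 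So it remains to treat part (II): $f$ irreducible of degree $10$, and we must characterize when $e(\pi)=1$.

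\emph{The criterion.} The places $v$ of $K$ above $p$ are in bijection with the monic irreducible factors of $f$ over $\mathbb{Q}_p$, and $\mathrm{inv}_v(D)\equiv[K_v:\mathbb{Q}_p]\cdot v_p(\pi)/n\pmod 1$. If $g\mid f$ is irreducible over $\mathbb{Q}_p$ of degree $d$, all of whose roots have $v_p$-valuation $\lambda$ (a slope of the Newton polygon $N(f)$), this invariant equals $d\lambda/n$; therefore
$$e(\pi)=1\iff n\mid d\lambda\ \text{ for every irreducible factor }g\text{ of }f\text{ over }\mathbb{Q}_p.$$
Now $N(f)$ is a convex polygon from $(0,5n)$ to $(10,0)$ whose vertices lie on the integer lattice (the coefficients of $f$ are integers) and whose slope multiset is stable under $\lambda\mapsto n-\lambda$ (because $\pi\overline\pi=q$). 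Listing all such polygons, there are exactly thirteen of them, corresponding to the thirteen cases (2)--(14). For each one, the requirement ``$N(f)$ equals this polygon'' is, through the dictionary $v_p(a_i)=v_p(c_{10-i})$ (with $c_j$ the coefficient of $t^{j}$ in $f$), equivalent to the stated valuation conditions --- equalities at the vertices, inequalities elsewhere. The remaining clauses in each case are the translation of the displayed criterion: a slope $0$ or $n$ imposes nothing; a slope $n/2$ forces every $\mathbb{Q}_p$-factor carrying it to have even degree (``no root of valuation $n/2$ and no factor of degree $3$ and/or $5$''); a slope of denominator $3$, $4$, or $5$ forces the block of roots of that valuation to be a single irreducible factor (``no root of valuation $n/3,\,n/4,\dots$'', ``no irreducible factor of degree $2$'', or ``$f$ has two irreducible factors of degree $4$, resp.\ $5$''). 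Conversely, if $f$ is irreducible and satisfies one of (2)--(14), the same computations yield $e(\pi)=1$, so $f=m_\pi$ is the characteristic polynomial of the simple abelian variety $X_\pi$, of dimension $\tfrac12\deg f=5$.

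\emph{Main obstacle.} The heart of the argument is this last step: one must verify that the thirteen symmetric integral Newton polygons of dimension-$5$ abelian varieties are matched correctly and exhaustively with the valuation patterns in (2)--(14), that these patterns are pairwise incompatible, and --- the delicate point --- that for each polygon the conditions imposed on roots and irreducible factors of $f$ over $\mathbb{Q}_p$ are \emph{precisely} equivalent to $e(\pi)=1$. For instance, for the polygon with slopes $n/5,4n/5$ one must show $e(\pi)=1$ holds exactly when $f$ factors over $\mathbb{Q}_p$ into two irreducible quintics (ruling out splittings of type $2+3$, $1+4$, etc.), while for the polygon with slopes $n/4,n/2,3n/4$ one must separate the degree-$4$ blocks (which must remain irreducible) from the degree-$2$ block (which may not acquire a $\mathbb{Q}_p$-rational root but may itself be irreducible), explaining why ``no irreducible factor of degree $2$'' appears in (4) but not in (10). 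One should also confirm, using the surjectivity in Honda--Tate, that each of the thirteen cases is realized by some $f$, so that none of (2)--(14) is vacuous.
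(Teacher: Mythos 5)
Your reduction coincides with the paper's: Honda--Tate plus the parity argument eliminating $\deg m_\pi=5$ (the paper's Proposition \ref{prop3.5}), Theorem \ref{theo1.2} with $g=5$ for the reducible case, and the criterion that $e(\pi)=1$ iff $n\mid \deg(f_i)\cdot\lambda_i$ for every irreducible factor $f_i$ of $f$ over $\mathbb{Q}_p$ (the paper's Corollary \ref{cor3.2}). That framework is correct. But the content of the theorem is the thirteen explicit conditions, and that is exactly the part you assert rather than prove. One point is not merely deferred labor but an incorrect justification: you claim the vertices of $\mathcal{NP}(f)$ lie on the integer lattice because $f$ has integer coefficients, and that ``listing all such polygons, there are exactly thirteen of them.'' Under those constraints alone the count is wrong: for $n$ even, the symmetric lattice polygon with slopes $(n/10)^5,(9n/10)^5$ has its vertex at $(5,n/2)\in\mathbb{Z}\times\mathbb{Z}$ and would be a fourteenth case, and more appear as $n$ grows. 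The constraint actually needed (which the paper takes from Haloui--Singh) is that every vertex lies in $\mathbb{Z}\times n\mathbb{Z}$; in the present setting this follows from the criterion itself, since a vertex $(i,y)$ splits $f$ over $\mathbb{Q}_p$ and $y$ is then a sum of quantities $v_p(f_i(0))$, each forced into $n\mathbb{Z}$. Only with that normalization does the enumeration close at thirteen, matching (2)--(14).

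The second gap is the block-by-block verification that, for each polygon, condition $(**)$ is \emph{equivalent} to the stated root/factor conditions. You correctly identify the relevant dichotomies (why ``two irreducible factors of degree $4$'' appears in (10) while ``no irreducible factor of degree $2$'' appears in (4); why a slope-$n/2$ block only needs even-degree factors), but you flag them as obstacles instead of resolving them, and the bookkeeping is genuinely non-uniform: e.g.\ in case (2) the slope-$n/2$ block has length $8$, so excluding roots and degree-$3$ factors automatically excludes degree-$5$ and degree-$7$ factors, whereas in case (14) the block has length $10$ and degree-$5$ factors must be excluded explicitly. The paper's proof is precisely this nine-case (thirteen-polygon) check; without carrying it out, your argument establishes the strategy and the correct criterion but not the theorem. (Your closing remark about confirming non-vacuousness of each case via surjectivity in Honda--Tate is not needed: the theorem is an equivalence for a given Weil polynomial and makes no existence claim.)
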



\section{Preliminaries and problem settings}
Now we briefly review the Honda-Tate theory (cf. \cite{WM,Ki}) which is a powerful classification theory of (simple) abelian varieties over finite fields. 

We denote the set of $q$-Weil numbers by $W(q)$ and define the following equivalence relation on $W(q)$: We say that  $\pi, \pi^{\prime} \in W(q)$ are {\it conjugate} (and write $\pi \sim \pi^{\prime}$) if $\pi$ and $\pi^{\prime}$ have the same minimal polynomial over $\mathbb{Q}$.

\begin{theo}[cf. [1, Theorem 9\rm{]}]  \label{thm1.1}
There is a bijection $X \longmapsto \pi_X$ from the set of $\mathbb{F}_q$-isogeny classes of simple abelian varieties over $\mathbb{F}_q$ to the set of conjugacy classes of $W(q)$.
\end{theo}
In other words, this theorem claims that there is a one-to-one correspondence between two seemingly unrelated objects --- abelian varieties and algebraic integers.

Let $X$ be a simple abelian variety over $\mathbb{F}_q$ and $\pi_X$ the $q$-th power Frobenius endomorphism of $X$. Let $E:={\rm End}_{\mathbb{F}_q}^{0} (X) = {\rm End}_{\mathbb{F}_q} (X) \otimes_{\mathbb{Z}} \mathbb{Q}$ and $F:=\mathbb{Q} [\pi_X] \subseteq E$. Then $E$ is a division algebra whose center is $F$. The dimension of $X$ in Theorem \ref{thm1.1} satisfies the following property.
\begin{prop}[cf. [1, Theorem 8\rm{]}]\label{prop1.2}
With notation as above, the dimension of the abelian variety $X$ satisfies
$$
2{\rm dim}(X) = \sqrt{[E:F]}\cdot [F:\mathbb{Q}].
$$
\end{prop}

Let $v$ be a place of $F$, $F_v$ the completion of $F$ at $v$ and ${\rm Br}(F_v)$ the Brauer group of $F_v$. Then we have the following formula for the invariant ${\rm inv}_v (E) := {\rm inv}_{F_v} (E \otimes_F F_v ) \in {\rm Br}(F_v) \subset \mathbb{Q}/\mathbb{Z}$.

\begin{prop}[cf. [1, Theorem 8\rm{]}]\label{prop1.3}
We have
\begin{itemize}
\item ${\rm inv}_v (E) = 1/2$ if $v$ is a real place,
\item ${\rm inv}_v (E) = 0$ if $v$ is a finite place not dividing $p$ or $v$ is a complex place,
\item ${\rm inv}_v (E) = \dfrac{v_p (\pi_X)}{v_p (q)}\cdot [F_v : \mathbb{Q}_p] \ {\rm mod} \ \mathbb{Z}$ if $v$ is a place dividing $p$.
\end{itemize}
\end{prop}

The following useful lemma shows that an abelian variety corresponding to a real Weil number must be of dimension less than or equal to $2$.
\begin{lem}\label{lem2.1}
Let $X$ be a simple abelian variety over $\mathbb{F}_q$ with $q=p^n$ elements and $f_X (t)$ the characteristic polynomial. Suppose that $f_X (t)$ has a real root. Then we have
\begin{itemize}
\item if $n$ is even, then ${\rm dim}(X)=1$,
\item if $n$ is odd, then ${\rm dim}(X)=2$.
\end{itemize}
\end{lem} 
\begin{proof} (cf. \cite[5.1]{Ki})
Let $\varpi$ be a real root of $f_X (t)$. Then $F=\mathbb{Q}(\pi_X)$ is identified with $\mathbb{Q}(\varpi)$, and since $F$ has a real embedding $\sigma : F=\mathbb{Q}(\varpi) \hookrightarrow \mathbb{R} \subset \mathbb{C}$ and $\varpi$ is a Weil number, $\sigma(\varpi^2) = \sigma(\varpi) \cdot \overline{\sigma(\varpi)} = q = p^n$. So $\varpi^2 = p^n$.

First assume $n$ is even. Then $\varpi = \pm p^{n/2} \in \mathbb{Q}$, hence $F= \mathbb{Q}$. Let $E={\rm End}_{\mathbb{F}_q}^{0} (X)$. From Proposition \ref{prop1.3},  ${\rm inv}_v (E) = 1/2$ for the unique real place $v$ and 
$$
{\rm inv}_v (E) = \dfrac{v_p (\pi_X)}{v_p (q)}\cdot [F_v : \mathbb{Q}_p] =\dfrac{v_p (\varpi)}{v_p (q)}\cdot [F_v : \mathbb{Q}_p] = 1/2
$$
for the unique place dividing $p$.
Thus the order of $[E]$ in ${\rm Br}(F)$ has 2, so $\sqrt{[E:F]}=2$ (cf. \cite[Theorem 3.6]{Ki}). From Proposition \ref{prop1.2}, we obtain
$$
{\rm dim}(X) = (1/2)\sqrt{[E:F]}\cdot [F:\mathbb{Q}]=1.
$$
Next assume $n$ is odd. Then $F=\mathbb{Q}(\varpi) = \mathbb{Q}(\sqrt{p})$, and there are two real places. From Proposition \ref{prop1.3}, the invariant of $E$ at these two places is $1/2$. Since there exists only one place $v$ dividing $p$, this implies that the invariant of $E$ at $v$ dividing $p$ must be $0 \in \mathbb{Q}/\mathbb{Z}$. So $\sqrt{[E:F]}=2$. From Proposition \ref{prop1.2}, we also obtain
$$
{\rm dim}(X) = (1/2)\sqrt{[E:F]}\cdot [F:\mathbb{Q}]=2.
$$

\end{proof}

\begin{rmk}
\upshape
(i) Let $f(t)$ be an irreducible Weil polynomial and suppose that $f(t)$ has a real root $\varpi$. Then the proof of Lemma \ref{lem2.1} shows $\varpi = \pm \sqrt{q}$, hence $f(t)= t\pm \sqrt{q}$ (resp. $t^2 -q$) for $n$ even (resp. odd). Thus the degree of $f(t)$ is at most 2.

(ii) It is known that an abelian variety corresponding to a real Weil number is {\it supersingular}. See \cite[5.1]{Ki} for details.
\end{rmk}

The following lemma plays an important role in a specific situation.
\begin{lem}\cite[Proposition 2.5]{Maisner}\label{lem2.3}
Let $\mathbb{F}_q$ be a finite field with $q=p^n$ elements. Let $X$ be a simple abelian variety over $\mathbb{F}_q$ with characteristic polynomial $f_X (t) = (t^2 + at + q)^{{\rm dim}(X)}$, where $a \in \mathbb{Z}$ such that $|a|< 2\sqrt{q}$. Let $m = v_p (a)$ and $d=a^2 -4q$. Then
\begin{equation*}
{\rm dim}(X)=
\begin{cases}
\frac{n}{(m,n)} & \text{if $m<\frac{n}{2}$} \\
2 & \text{if $m \geq \frac{n}{2}$ and $d \in \mathbb{Q}_p ^{\times 2}$} \\
1 & \text{if $m \geq \frac{n}{2}$ and $d \not\in \mathbb{Q}_p ^{\times 2}$}
\end{cases}
\end{equation*}

\end{lem}
Let $X$ be a simple abelian variety of dimension $g$ over $\mathbb{F}_q$ and $f_X (t)$ the characteristic polynomial. First, note that $f_X(t)$ is a Weil polynomial of degree $2g$. Thus we must consider the following problems to determine explicitly the characteristic polynomials.

\begin{prob}\label{prob1}
Find a necessary and sufficient condition for a polynomial $f(t) = t^{2g} + a_1 t^{2g-1} + \cdots + a_{g-1} t^{g+1} + a_g t^g + a_{g-1} q t^{g-1} + \cdots + a_1 q^{g-1} t + q^g$ with integer coefficients to be a Weil polynomial.
\end{prob}
In general, this problem is very difficult and has only been solved up to dimension 5 at present.

Second, we recall that since $X$ is simple, $f_X (t) = m_X (t) ^e$, where $m_X (t)$ is an irreducible polynomial and $e\geq1$ is an integer (the multiplicity of $X$).
\begin{prob}\label{prob2}
With notation as above, determine all possible multiplicities $e$.
\end{prob} 
Lemma \ref{lem2.1} plays an important role in dealing with Problem \ref{prob2}. As another approach to Problem \ref{prob2}, Theorem \ref{theo1.1} is also important. Finally, for each $e$ examined in Problem \ref{prob2}, we consider the following problem.
\begin{prob}\label{prob3}
Find a necessary and sufficient condition for a Weil polynomial $f(t) = t^{2g} + a_1 t^{2g-1} + \cdots + a_{g-1} t^{g+1}+ a_g t^g + a_{g-1} q t^{g-1} + \cdots + a_1 q^{g-1} t + q^g$ to be the characteristic polynomial of a simple abelian variety of dimension $g$ over $\mathbb{F}_q$. Namely, for a Weil polynomial $f(t)$, find a condition on the coefficients of $f(t)$ under which there exists a simple abelian variety over $\mathbb{F}_q$ whose characteristic polynomial coincides with $f(t)$. 
\end{prob}
Here we briefly review the known results concerning the above problems. Problems 1,2 and 3 are solved in \cite[theorem 4.1]{Wa} for dimension 1; in \cite{Maisner} and \cite{R} for dimension 2; in \cite{Ha} and \cite[Proposition 1,2]{Xi} for dimension 3; and in \cite{HaSi} and \cite[Theorem 2]{Xi} for dimension 4. Problem 1 is solved in \cite{Sohn} for dimension 5.



\section{Main results}
Let $X$ be a simple abelian variety of dimension $g$ over $\mathbb{F}_q$ with $q=p^n$ elements, $\pi_X$ the $q$-th power Frobenius endomorphism of $X$ and $f_X (t)$ the characteristic polynomial of $X$. We have $f_X (t) = m_X (t) ^e$, where $m_X (t)$ is an irreducible polynomial and $e\geq1$ is an integer. Let $E:={\rm End}^{0}_{\mathbb{F}_q} (X)$ and $F:= \mathbb{Q}(\pi_X) \subset E$.

\begin{lem}\label{lem4.1}
The least common denominator of ${\rm inv}_v (E)$ for all places of $F$ is equal to $e$.
\end{lem}
\begin{proof}
The polynomial $m_X (t)$ is the minimal polynomial of $\pi_X$ over $\mathbb{Q}$ and the degree is $2g/e$. Hence $[F:\mathbb{Q}]=2g/e$. From Proposition \ref{prop1.2}, we have $\sqrt{[E:F]}=e$. Since $\sqrt{[E:F]}$ coincides with the order of $[E]$ in ${\rm Br}(F)$ (cf. \cite[Theorem 3.6]{Ki}), which is equal to the least common denominator of all invariants of $E$, we obtain the desired conclusion.
\end{proof}

\begin{cor}\label{cor3.2}
An irreducible Weil polynomial $f(t)$ of degree $2g$ is the characteristic polynomial of a simple abelian variety of dimension $g$ over $\mathbb{F}_q$ (i.e. $e=1$) if and only if $f(t)$ has no real root and the following condition (**) holds.
\begin{gather*}
(**)
\begin{cases}
\dfrac{v_p (f_i (0))}{n} \in \mathbb{Z}, \\{\rm where}\  f_i (t) \ {\rm runs}\ {\rm through}\  {\rm all}\ {\rm monic}\  {\rm irreducible}\  {\rm factors}\  {\rm of}\  f(t)\  {\rm in}\  \mathbb{Q}_p [t].
\end{cases}
\end{gather*}
\end{cor}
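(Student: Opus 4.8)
The plan is to reduce the corollary to Lemma \ref{lem3.1} and the explicit invariant formulas of Proposition \ref{prop2.3}. First I would fix a root $\pi$ of $f(t)$. Since $f$ is a Weil polynomial, $\pi$ is a $q$-Weil number, and since $f$ is irreducible it is the minimal polynomial of $\pi$ over $\mathbb{Q}$. By Honda--Tate theory (Theorem \ref{theo2.1}) there is a simple abelian variety $X$ over $\mathbb{F}_q$ with $\pi_X \sim \pi$, so that $m_X(t) = f(t)$ and $f_X(t) = f(t)^e$; by Proposition \ref{prop2.2} this forces $[F:\mathbb{Q}] = 2g$ and $\dim X = ge$. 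From this it is immediate that $f(t)$ is the characteristic polynomial of a simple abelian variety of dimension $g$ over $\mathbb{F}_q$ if and only if $e=1$: if $e=1$ then $X$ itself works, while if some simple $Y$ has $f_Y = f$, then irreducibility of $f$ gives $m_Y = f$, hence $\pi_Y \sim \pi$, hence $Y$ is isogenous to $X$ (Tate's theorem) and $e=1$. So the whole statement is: $e=1$ if and only if $f$ has no real root and $(**)$ holds.

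By Lemma \ref{lem3.1}, $e$ is the least common denominator of $\mathrm{inv}_v(E)$ as $v$ runs over all places of $F = \mathbb{Q}(\pi)$, so $e=1$ if and only if $\mathrm{inv}_v(E) = 0$ in $\mathbb{Q}/\mathbb{Z}$ for every $v$. Now I would go through the cases of Proposition \ref{prop2.3}. Complex archimedean places and finite places not above $p$ contribute $0$ automatically, so they impose no condition. A real place of $F$ exists exactly when $f(t)$ has a real root, since real places of $F=\mathbb{Q}(\pi)$ correspond to real embeddings, that is, to real roots of the minimal polynomial $f$; and such a place contributes $\mathrm{inv}_v(E) = 1/2 \ne 0$. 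Hence ``$f$ has no real root'' is necessary, and conversely it kills all archimedean obstructions.

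It remains to analyze the places above $p$, which is the technical heart of the proof. The places $v \mid p$ of $F$ are in bijection with the monic irreducible factors $f_i(t)$ of $f(t)$ in $\mathbb{Q}_p[t]$ via the decomposition $F \otimes_{\mathbb{Q}} \mathbb{Q}_p \cong \prod_i \mathbb{Q}_p[t]/(f_i(t))$, so that $F_v \cong \mathbb{Q}_p(\pi_i) = \mathbb{Q}_p[t]/(f_i(t))$ with $[F_v:\mathbb{Q}_p] = \deg f_i$, where $\pi_i$ is the image of $\pi$. The key computation is $v_p(\pi_i) = v_p(f_i(0))/\deg f_i$: indeed $f_i$ is the characteristic polynomial of $\pi_i$ over $\mathbb{Q}_p$ (as $F_v = \mathbb{Q}_p(\pi_i)$), so $N_{F_v/\mathbb{Q}_p}(\pi_i) = \pm f_i(0)$, and in a $p$-adic field $v_p\bigl(N_{F_v/\mathbb{Q}_p}(x)\bigr) = [F_v:\mathbb{Q}_p]\,v_p(x)$ because all $\mathbb{Q}_p$-conjugates of $x$ share the same valuation. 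Feeding this into the third bullet of Proposition \ref{prop2.3}, with $v_p(q) = n$, gives
$$
\mathrm{inv}_v(E) \;=\; \frac{v_p(\pi_i)}{n}\,[F_v:\mathbb{Q}_p] \;=\; \frac{v_p(f_i(0))}{n} \pmod{\mathbb{Z}},
$$
so $\mathrm{inv}_v(E) = 0$ at this $v$ is precisely $v_p(f_i(0))/n \in \mathbb{Z}$. Combining the three cases shows $e=1$ if and only if $f$ has no real root and $(**)$ holds. The main obstacle is purely the bookkeeping at $p$: correctly matching the factorization of $F \otimes_{\mathbb{Q}} \mathbb{Q}_p$ into the local fields $\mathbb{Q}_p[t]/(f_i(t))$ with the places $v \mid p$, and checking that the normalization of $v_p$ on $F_v$ implicit in Proposition \ref{prop2.3} is the one for which the displayed norm identity holds; once these conventions are pinned down, everything else is routine.
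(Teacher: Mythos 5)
Your proposal is correct and follows essentially the same route as the paper: both reduce the statement to Lemma \ref{lem3.1} via the identification of places above $p$ with the irreducible factors of $f$ in $\mathbb{Q}_p[t]$, and both compute $\mathrm{inv}_v(E) = v_p(f_i(0))/n$ from Proposition \ref{prop2.3} using the fact that all roots of an irreducible $p$-adic factor share the same valuation (your norm formulation is the same computation). The only difference is that you make the reduction ``$f$ is the characteristic polynomial of a simple abelian variety of dimension $g$ iff $e=1$'' fully explicit via Tate's theorem, which the paper leaves implicit.
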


\begin{proof}
Since $F \cong \mathbb{Q}[t]/(f(t))$, we have $F\otimes_{\mathbb{Q}} \mathbb{Q}_p \cong \mathbb{Q}_p [t] /(f(t)) \cong \prod_{i} \mathbb{Q}_p [t]/(f_i (t))$. Thus, the set of places of $F$ dividing $p$ is in one-to-one correspondence with $\{ f_i (t) \}_i$. First, assume that an irreducible Weil polynomial $f(t)$ of degree $2g$ is the characteristic polynomial of a simple abelian variety $X$ of dimension $g$ over $\mathbb{F}_q$. Then the least common denominator of ${\rm inv}_v (E)$ is $1$ by Lemma \ref{lem4.1}. Thus $f(t)$ has no real root from Proposition \ref{prop1.3} (otherwise, $F$ admits a real place), and we have $v_p (f_i (0)) /n \in \mathbb{Z}$ from Proposition \ref{prop1.3}, where $f_i (t)$ runs through all irreducible factors of $f(t)$ in $\mathbb{Q}_p [t]$. Indeed, if $f_i (t) = (t-\alpha_1)\cdots (t-\alpha_{{\rm deg} (f_i (t))})$, then we have $f_i (0) = \pm \alpha_1\cdots \alpha_{{\rm deg} (f_i (t))}$. Since $f_i (t)$ is irreducible over $\mathbb{Q}_p$, we obtain $v_p (\alpha_1) = \cdots = v_p (\alpha_{{\rm deg} f_i (t)}) (=v_p (\pi_X))$. Hence $v_p (f_i (0)) = v_p (\pi_X) {\rm deg} (f_i (t)) = v_p (\pi_X)[F_v : \mathbb{Q}_p]$. Conversely, assume $v_p (f_i (0))/n \in \mathbb{Z}$ for all irreducible factors $f_i (t)$ in $\mathbb{Q}_p [t]$ of an irreducible Weil polynomial $f(t)$ of degree $2g$. Let $X^{\prime}$ be a simple abelian variety of dimension $g^{\prime}$ corresponding to a root of $f(t)$ in Theorem \ref{thm1.1} and $E^{\prime}={\rm End}^0 (X^{\prime})$. Then $f_{X^{\prime}} (t) = f(t) ^e$, so $2g^{\prime} =2ge$. Since $f_{X^{\prime}} (t)$ has no real root and we have
$$
{\rm inv}_v (E^{\prime}) = \dfrac{v_p (f_i (0))}{n} \in \mathbb{Z}
$$
for the place $v$ corresponding to $f_i (t)$, the least common denominator of ${\rm inv}_v (E^{\prime})$ is $1$. Thus we obtain $e=1$ by Lemma \ref{lem4.1}.
\end{proof}

In the known approach to Problem \ref{prob2}, the possibility of $e$ is examined by determining whether $f_X (t)$ has a real root or not and using the fact that $e$ divides $2g$. Here, as another approach, we propose to use a relation between $n$ and $e$ given by Theorem \ref{theo1.1}.
\begin{proof}[Proof of Theorem \ref{theo1.1}]
As $m_X (t)$ is the minimal polynomial of $\pi_X$ over $\mathbb{Q}$, $F \cong \mathbb{Q}[t]/(m_X (t))$. Considering the decomposition of $m_X (t)$ in $\mathbb{Q}_p [t]$, we have 
$$
F\otimes \mathbb{Q}_p = \mathbb{Q}_p [t] /(m_X (t)) = \prod^r_{i=1} \mathbb{Q}_p [t] /(m_i (t)),
$$
where $m_i (t) \in \mathbb{Q}_p [t]\ (i=1, \cdots, r)$ is an irreducible monic polynomial such that $m_X (t) = m_1 (t) \cdots m_r (t)$.

For each $i$, $\mathbb{Q}_p [t] /(m_i (t))= F_v$, where $F_v$ is the completion at the place $v$ above $p$ corresponding to the embedding $F \hookrightarrow \mathbb{Q}_p [t]/(m_i (t))$. Hence $[F_v : \mathbb{Q}_p]= {\rm deg}(m_i (t))$. Let $v_p$ denote the $p$-adic additive valuation normalized as $v_p (p) =1$. Since $m_i (t)$ is an irreducible polynomial over $\mathbb{Q}_p$, $v_p (\alpha)$ has the same value for all roots $\alpha$ of $m_i (t)$. Thus
$$
v_p (m_i (0)) = {\rm deg}(m_i (t)) \cdot v_p (\alpha)
$$
and from Proposition \ref{prop1.3}, we have
$$
{\rm inv}_v (E) = \dfrac{v_p (\pi_X)}{v_p (q)}\cdot [F_v : \mathbb{Q}_p] = \dfrac{v_p (m_i (0))}{n}
$$
for a place $v$ dividing $p$. Since there is no real place by assumption, the invariants of $E$ at the other places are equal to $0$ by Proposition \ref{prop1.3}.

Since $v_p (m_i (0))\in \mathbb{Z}$, $e$ divides $n$ from Lemma \ref{lem4.1}.
\end{proof}

\begin{rmk}\label{rmk3.3}
\upshape

When $f_X (t)$ has a real root, the assertion of Theorem \ref{theo1.1} does not hold in general. Indeed, assume that $n$ is odd and let $X$ be a simple abelian variety over $\mathbb{F}_q$ corresponding to the Weil number $\sqrt{q}$. As $t^2- q$ is irreducible over $\mathbb{Q}$, we may write $f_X (t) = (t^2 - q)^e$. Now, by Lemma \ref{lem2.1}, we have ${\rm dim}(X) =2$, hence $e=2$. Thus, $e$ does not divide $n$.

\end{rmk}

In general, it is difficult to describe the characteristic polynomials for all $e$ dividing $2g$. On the other hand, in the case of $e=g$, i.e. in the case that $m_X (t)$ is an irreducible polynomial of degree 2, we have Theorem \ref{theo1.2}, which describes explicitly the characteristic polynomials for arbitrary $g$. (This was only known for $g \leq 4$. cf. Remark \ref{rmk3.4} below.) 

\begin{nt}
We write ${\rm lcd}(a_1 , \cdots , a_m)$ for the least common denominator of $a_1 , \cdots , \\ a_m \in \mathbb{Q}/\mathbb{Z}$ and ${\rm d}(a)$ for the denominator of $a\in \mathbb{Q}/\mathbb{Z}$.
\end{nt}

\begin{proof}[Proof of Theorem \ref{theo1.2}]
Assume first that $g$ divides $n$, $b=q$, $|a|< 2\sqrt{q}$ and $a= kq^{s/g}$, where $k,s$ are integers satisfying $(k,p)=1$, $(s,g)=1$ and $1\leq s < g/2$. Since $f(t)$ is a Weil polynomial, there exists a simple abelian variety $X$ corresponding to a root of $f(t)$ in Theorem \ref{thm1.1}. Then we have $f_X (t) = (t^2 + at+q)^{{\rm dim}(X)}$. Since 
\begin{align*}
m:=v_p (a)&=v_p (kq^{s/g}) \\
                  &= v_p (p^{ns/g}) \ \ \ \ \ {\rm since} \ (k,p)=1, \\
                  &=ns/g < n/2, 
\end{align*}
we obtain ${\rm dim}(X) = g$ from Lemma \ref{lem2.3}.

Conversely, we assume that the polynomial $f (t) = (t^2 + at + b)^g $ is the characteristic polynomial of a simple abelian variety $X$ of dimension $g$ over $\mathbb{F}_q$. Since $f(t)$ is a Weil polynomial, we get $|a|\leq 2\sqrt{q}$ and $|b|=q$. 

First, suppose $b=-q$, then $a=0$. This implies that $f(t)$ has a real root, which contradicts $g>2$ from Lemma \ref{lem2.1}. 

Second, suppose $b=q$ and $|a|= 2\sqrt{q}$. Then $f(t)$ has a real root again. Similarly, this contradicts $g>2$. Hence we obtain $b=q$ and $|a| < 2\sqrt{q}$. Moreover, $g$ divides $n$ by Theorem \ref{theo1.1}. From Lemma \ref{lem4.1}, 
\begin{align}
\tag{*}{\it the}\ {\it least}\ {\it common}\ {\it denominator}\ {\it of}\ {\it all}\ {\it invariants}\ {\it of}\ E={\rm End}^0 (X)\  {\it is}\  g.  
\end{align}
We consider the Newton polygon for $t^2+at+q$. This has 3 possible vertices $(0,n), (1,v_p (a))$ and $(2,0)$.

Suppose the Newton polygon is a line, i.e. $v_p (a) \geq n/2$ or $a=0$. Then we can decompose $t^2 +at+q$ as $(t-\alpha_1 )(t- \alpha_2)$ in $\overline{\mathbb{Q}}_p [t]$ so that $v_p (\alpha_1 ) =v_p (\alpha_2 ) =n/2$. We have
\begin{align*}
&{\rm lcd}\left( \frac{v_p (\alpha_1)}{n}[\mathbb{Q}_p (\alpha_1): \mathbb{Q}_p] , \frac{v_p (\alpha_2)}{n}[\mathbb{Q}_p (\alpha_2): \mathbb{Q}_p]  \right) \\
&= {\rm lcd}\left( \frac{1}{2}[\mathbb{Q}_p (\alpha_1): \mathbb{Q}_p] , \frac{1}{2}[\mathbb{Q}_p (\alpha_2): \mathbb{Q}_p]  \right),
\end{align*}
which is 1 or 2 by using Proposition \ref{prop1.3}. This contradicts the fact (*) since $g>2$. 

Hence the point $(1,v_p (a))$ must be a vertex of the Newton polygon. In other words, we have $v_p (a) < n/2$ and $a \neq 0$. Then we can decompose $t^2 +at+q$ as $(t-\alpha_1 )(t- \alpha_2)$ so that $t-\alpha_1, t-\alpha_2 \in \mathbb{Q}_p [t]$, $v_p (\alpha_1 ) =n-v_p (a)$ and $v_p (\alpha_2 ) = v_p (a)$. We have
\begin{align*}
&{\rm lcd}\left( \frac{v_p (\alpha_1)}{n}[\mathbb{Q}_p (\alpha_1): \mathbb{Q}_p] , \frac{v_p (\alpha_2)}{n}[\mathbb{Q}_p (\alpha_2): \mathbb{Q}_p]  \right) \\
&= {\rm lcd}\left( 1-\frac{v_p (a)}{n}, \frac{v_p (a)}{n} \right) \\
&= {\rm d}\left(\frac{v_p (a)}{n} \right) = g
\end{align*}
if and only if $v_p (a) = ns/g$ with an integer $s$ satisfying $(s,g)=1$. Further, since $v_p (a) < n/2$, we have $s < g/2$. This implies that ($g$ divides $n$ and) $a= kq^{s/g}$, where $k,s$ are integers satisfying $(k,p)=1$, $(s,g)=1$ and $1\leq s < g/2$.
\end{proof}

\begin{rmk}\label{rmk3.4}
\upshape
Theorem 1.2 for the case of $g=1,2,3$ and $4$ is in \cite[Theorem 4.1]{Wa}, \cite[Theorem 2.9]{Maisner}, \cite[Proposition 2]{Xi} and \cite[Proposition 3]{Xi} respectively. (Note that the condition (ii) in \cite[Proposition 2]{Xi} is unnecessary.)

\end{rmk}
The following proposition gives an answer to Problem \ref{prob2} in the case of odd prime dimension.
\begin{prop}\label{prop3.5}
Let $l$ be an odd prime. Consider the characteristic polynomial of a simple abelian variety of {\it dimension $l$} over $\mathbb{F}_q$. Then $e=1$ or $e=l$.
\end{prop}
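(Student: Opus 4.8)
The plan is to combine the elementary divisibility $e \mid 2{\rm dim}(X) = 2l$ with the fact that $F = \mathbb{Q}(\pi_X)$ is forced to have even degree over $\mathbb{Q}$. Write $f_X(t) = m_X(t)^e$ with $m_X$ irreducible. Since $m_X$ is the minimal polynomial of $\pi_X$ over $\mathbb{Q}$ and $\deg f_X = 2l$, we have $[F:\mathbb{Q}] = \deg m_X = 2l/e$; in particular $e \mid 2l$, so a priori $e \in \{1,2,l,2l\}$. Hence it suffices to prove that $[F:\mathbb{Q}]$ is even: then $2l/e$ is a positive even integer, so $l/e \in \mathbb{Z}$, i.e. $e \mid l$, and since $l$ is prime we get $e = 1$ or $e = l$.

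So the real content is the parity of $[F:\mathbb{Q}]$. First I would note that $f_X(t)$ has no real root: otherwise Lemma \ref{lem2.4} would force ${\rm dim}(X) \le 2$, contradicting ${\rm dim}(X) = l \ge 3$ (here we use that $l$ is an odd prime). Next I would set $\beta := \pi_X + q\pi_X^{-1} \in F$ (this makes sense since $\pi_X \ne 0$). Because $\pi_X$ is a $q$-Weil number, for every embedding $\sigma : F \hookrightarrow \mathbb{C}$ we have $|\sigma(\pi_X)|^2 = q$, hence $q/\sigma(\pi_X) = \overline{\sigma(\pi_X)}$ and therefore $\sigma(\beta) = \sigma(\pi_X) + \overline{\sigma(\pi_X)} \in \mathbb{R}$; thus $\mathbb{Q}(\beta)$ is totally real. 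On the other hand $\pi_X$ satisfies $X^2 - \beta X + q = 0$ over $\mathbb{Q}(\beta)$, so $[F : \mathbb{Q}(\beta)] \le 2$. If this index were $1$, then $\pi_X$ would lie in the totally real field $\mathbb{Q}(\beta)$, so every conjugate of $\pi_X$ — hence a root of $f_X = m_X^e$ — would be real, contradicting the previous step. Thus $[F:\mathbb{Q}(\beta)] = 2$ (so $F$ is a CM field), and $[F:\mathbb{Q}] = 2\,[\mathbb{Q}(\beta):\mathbb{Q}]$ is even, as desired.

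I do not anticipate a genuine obstacle; the only points that need care are performing the real-root exclusion \emph{before} the CM-type argument (to avoid circularity) and the bookkeeping $[F:\mathbb{Q}] = 2l/e$. An alternative route would be to invoke that the sum of the local invariants ${\rm inv}_v(E)$ over all places $v$ of $F$ vanishes in $\mathbb{Q}/\mathbb{Z}$ and to compute $v_p(N_{F/\mathbb{Q}}(\pi_X))$; this also excludes $e = 2$ and $e = 2l$ directly, but it brings in class field theory and seems heavier than necessary.
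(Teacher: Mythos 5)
Your proposal is correct, and its logical core coincides with the paper's: both arguments reduce to the fact that $f_X(t)$ cannot have a real root, because Lemma \ref{lem2.4} would then force ${\rm dim}(X)\le 2$, contradicting ${\rm dim}(X)=l\ge 3$. Where you differ is in how the parity of $\deg m_X = 2l/e$ enters. The paper runs the implication in the cheap direction: if $e=2$ or $e=2l$, then $m_X$ has odd degree, and a monic real polynomial of odd degree always has a real root --- contradiction, done in one line. You instead prove the stronger structural statement that $F=\mathbb{Q}(\pi_X)$ is a CM field (via $\beta=\pi_X+q\pi_X^{-1}$ generating a totally real subfield of index exactly $2$), deduce that $[F:\mathbb{Q}]$ is even, and then conclude $e\mid l$. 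Your route is sound and all the steps check out (including the care you take to exclude real roots \emph{before} concluding $[F:\mathbb{Q}(\beta)]=2$), and it buys you a genuinely stronger intermediate fact --- the CM property of $F$, which is useful elsewhere in Honda--Tate theory --- but for the proposition at hand it is more machinery than needed; the intermediate value theorem already finishes the job once real roots are excluded.
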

\begin{proof}
Since $e$ divides $2l$ and $l$ is a prime, we have either $e=1,2,l$ or $2l$. Suppose $e=2$ or $e=2l$. Then $m_X (t)$ is an irreducible polynomial of odd degree. Hence the polynomial $m_X (t)$ has a real root. This contradicts $l \geq 3$ by Lemma \ref{lem2.1}.
\end{proof}
In general, it is very difficult to explicitly describe the characteristic polynomial of an abelian variety of high dimension in the case of $e=1$. On the other hand, we know the characteristic polynomial in the case of $e=l$ by Theorem \ref{theo1.2}. Hence we ``almost" complete describing the characteristic polynomial of a simple abelian variety of prime dimension over $\mathbb{F}_q$ by Proposition 3.5.

Finally, we give the main result concerning the characteristic polynomial of a simple abelian variety of dimension 5.

Problem \ref{prob1} in dimension 5 is solved by G.Y.Sohn \cite[Theorem 2.1]{Sohn}. Problem \ref{prob2} in dimension 5 has been done in Proposition \ref{prop3.5}. Problem \ref{prob3} in dimension 5 corresponds to Theorem \ref{theo1.3} and we give the proof as follows.
\begin{proof}[Proof of Theorem \ref{theo1.3}]
Assume first $f(t)$ is not irreducible, i.e. $e=5$ by Proposition \ref{prop3.5}. This case has been done in Theorem \ref{theo1.2}. Next, an irreducible Weil polynomial $f(t)$ of degree 10 is the characteristic polynomial of a simple abelian variety $X$ of dimension 5 over $\mathbb{F}_q$ if and only if the condition (**) in Corollary \ref{cor3.2} holds. (Note that $f(t)$ has no real root from Remark 2.5 (i).)

Let $\mathcal{NP} (f)$ denote the Newton polygon of $f(t)$. Then $\mathcal{NP}(f)$ has 10 possible vertices $(0,5n), (1,4n+v_p (a_1)), (2, 3n+v_p (a_2)), (3, 2n+v_p (a_3)), (4, n+v_p (a_4)), (5, v_p (a_5)),$
$(6, v_p (a_4)), (7, v_p (a_3)), (8, v_p (a_2)), (9, v_p (a_1))$ and $(10,0)$. Note that if some of these points is a vertex, then the point must be a lattice point belonging to $\mathbb{Z} \times n\mathbb{Z}$. (cf. \cite[p.64]{HaSi}.) By symmetry of $\mathcal{NP}(f)$, it is sufficient to classify cases according to whether either $(1,4n+v_p (a_1)), (2, 3n+v_p (a_2)), (3, 2n+v_p (a_3)), (4, n+v_p (a_4))$ or $(5, v_p (a_5))$  is a vertex or not.
\begin{case}
{\bf \boldmath $(1,4n+v_p (a_1))$ is a sole vertex:}
\upshape

In this case, $\mathcal{NP}(f)$ is as in Figure 1. This occurs if and only if $v_p (a_1) =0, v_p (a_2 ) \geq n/2 , v_p (a_3) \geq n, v_p (a_4) \geq 3n/2$ and $v_p (a_5) \geq 2n$.  Then we can decompose $f(t)$ as $ \prod_{i=1}^{10} (t - \alpha_i)$ in $\overline{\mathbb{Q}}_p [t]$ so that
\begin{gather*} 
t- \alpha_1 ,\  t- \alpha_{10} ,\ \prod_{i=2}^{9} (t - \alpha_i)  \in \mathbb{Q}_p [t], \\
v_p (\alpha_1) = n, v_p (\alpha_{10}) = 0 , v_p (\alpha_i) = n/2\  \text{for}\  i=2,\cdots,9.
\end{gather*}
The condition (**) holds if and only if $f(t)$ has no root of valuation $n/2$ nor a factor of degree 3 in $\mathbb{Q}_p$.

\begin{figure}
  \begin{center}
    \begin{tabular}{c}   
    
         \begin{tikzpicture}
   \draw [thick, -stealth](-0.5,0)--(10.5,0) node [anchor=north]{};
   \draw [thick, -stealth](0,-0.5)--(0,5.5) node{};
   \node [anchor=north west] at (0,0) {O};

   \draw [very thick, domain=0:1, samples=200] plot(\x, -\x+5);
   \draw [very thick, domain=1:9, samples=200] plot(\x, -\x/2+9/2);
   \draw [very thick, domain=9:10, samples=200] plot(\x, 0);

   \draw [dashed](1,0) node [anchor=north]{$1$}--(1,1)--(1,5);
   \draw [dashed](2,0) node [anchor=north]{$2$}--(2,1)--(2,5);
   \draw [dashed](3,0) node [anchor=north]{$3$}--(3,1)--(3,5);
   \draw [dashed](4,0) node [anchor=north]{$4$}--(4,1)--(4,5);
   \draw [dashed](5,0) node [anchor=north]{$5$}--(5,1)--(5,5);
   \draw [dashed](6,0) node [anchor=north]{$6$}--(6,1)--(6,5);
   \draw [dashed](7,0) node [anchor=north]{$7$}--(7,1)--(7,5);
   \draw [dashed](8,0) node [anchor=north]{$8$}--(8,1)--(8,5);
   \draw [dashed](9,0) node [anchor=north]{$9$}--(9,1)--(9,5);
   \draw [dashed](10,0) node [anchor=north]{$10$}--(10,1)--(10,5);
   \draw [dashed](0,1) node [anchor=east]{$n$}--(1,1)--(10,1);
   \draw [dashed](0,2) node [anchor=east]{$2n$}--(1,2)--(10,2);
   \draw [dashed](0,3) node [anchor=east]{$3n$}--(1,3)--(10,3);
   \draw [dashed](0,4) node [anchor=east]{$4n$}--(1,4)--(10,4);
   \draw [dashed](0,5) node [anchor=east]{$5n$}--(1,5)--(10,5);
   \draw [dashed](0,5) node [anchor=east]{}--(0,5)--(10,0);
\end{tikzpicture} 
          \end{tabular}
    \caption{$(1,4n+v_p (a_1))$ is a vertex.}
  \end{center}
\end{figure} 

\end{case}
\begin{case}
{\bf \boldmath $(1,4n+v_p (a_1))$ and \boldmath $(4,n+v_p (a_4))$ are vertices:}
\upshape

In this case, $\mathcal{NP}(f)$ is as in Figure 2. This occurs if and only if $v_p (a_1) =0, v_p (a_2) \geq n/3, v_p (a_3) \geq 2n/3, v_p (a_4) =n$ and $v_p (a_5) \geq 3n/2$. Then we can decompose $f(t)$ as $\displaystyle \prod_{i=1}^{10} (t - \alpha_i)$ in $\overline{\mathbb{Q}}_p [t]$ so that
\begin{gather*} 
t- \alpha_1, \ t - \alpha_{10}  \in \mathbb{Q}_p [t], \\
 (t- \alpha_2)(t- \alpha_3)(t- \alpha_4), \ (t- \alpha_7)(t- \alpha_8)(t- \alpha_9) \in \mathbb{Q}_p [t], \\
 (t- \alpha_5)(t- \alpha_6) \in \mathbb{Q}_p [t], \\
v_p (\alpha_1) = n, v_p (\alpha_{10}) = 0 ,\\
 v_p (\alpha_2) =v_p (\alpha_3)=v_p (\alpha_4)= 2n/3, \\
 v_p (\alpha_7)=v_p (\alpha_8)=v_p (\alpha_9)= n/3, \\
 v_p (\alpha_5) = v_p (\alpha_6) = n/2.
\end{gather*}
It is obvious that $v_p (\alpha_1) /n , v_p (\alpha_{10} ) / n \in \mathbb{Z}$. The condition (**) holds if and only if $f(t)$ has no root of valuation $n/3,n/2$ or $2n/3$ in $\mathbb{Q}_p$.

\begin{figure}
  \begin{center}
    \begin{tabular}{c}   
         \begin{tikzpicture}
   \draw [thick, -stealth](-0.5,0)--(10.5,0) node [anchor=north]{};
   \draw [thick, -stealth](0,-0.5)--(0,5.5) node{};
   \node [anchor=north west] at (0,0) {O};

   \draw [very thick, domain=0:1, samples=200] plot(\x, -\x+5);
   \draw [very thick, domain=1:4, samples=200] plot(\x, -2/3*\x+14/3);
   \draw [very thick, domain=4:6, samples=200] plot(\x, -\x/2 +4);
   \draw [very thick, domain=6:9, samples=200] plot(\x, -\x/3 +3);
   \draw [very thick, domain=9:10, samples=200] plot(\x, 0);

   \draw [dashed](1,0) node [anchor=north]{$1$}--(1,1)--(1,5);
   \draw [dashed](2,0) node [anchor=north]{$2$}--(2,1)--(2,5);
   \draw [dashed](3,0) node [anchor=north]{$3$}--(3,1)--(3,5);
   \draw [dashed](4,0) node [anchor=north]{$4$}--(4,1)--(4,5);
   \draw [dashed](5,0) node [anchor=north]{$5$}--(5,1)--(5,5);
   \draw [dashed](6,0) node [anchor=north]{$6$}--(6,1)--(6,5);
   \draw [dashed](7,0) node [anchor=north]{$7$}--(7,1)--(7,5);
   \draw [dashed](8,0) node [anchor=north]{$8$}--(8,1)--(8,5);
   \draw [dashed](9,0) node [anchor=north]{$9$}--(9,1)--(9,5);
   \draw [dashed](10,0) node [anchor=north]{$10$}--(10,1)--(10,5);
   \draw [dashed](0,1) node [anchor=east]{$n$}--(1,1)--(10,1);
   \draw [dashed](0,2) node [anchor=east]{$2n$}--(1,2)--(10,2);
   \draw [dashed](0,3) node [anchor=east]{$3n$}--(1,3)--(10,3);
   \draw [dashed](0,4) node [anchor=east]{$4n$}--(1,4)--(10,4);
   \draw [dashed](0,5) node [anchor=east]{$5n$}--(1,5)--(10,5);
   \draw [dashed](0,5) node [anchor=east]{}--(0,5)--(10,0);
\end{tikzpicture}
          \end{tabular}
    \caption{$(1,4n+v_p (a_1))$ and $(4,n+v_p (a_4))$ are vertices.}
  \end{center}
\end{figure}

\end{case}

\begin{case}
{\bf \boldmath $(1,4n+v_p (a_1))$ and \boldmath $(5,v_p (a_5))$ are vertices:}
\upshape

In this case, $\mathcal{NP}(f)$ is as in Figure 3. This occurs if and only if $v_p (a_1) =0, v_p (a_2) \geq n/4, v_p (a_3) \geq n/2, v_p (a_4) \geq 3n/4$ and $v_p (a_5) = n$. Then we can decompose $f(t)$ as $\displaystyle \prod_{i=1}^{10} (t - \alpha_i)$ in $\overline{\mathbb{Q}}_p [t]$ so that
\begin{gather*} 
t- \alpha_1, \ t - \alpha_{10}  \in \mathbb{Q}_p [t], \\
 (t- \alpha_2)(t- \alpha_3)(t- \alpha_4)(t- \alpha_5) ,\ (t- \alpha_6)(t- \alpha_7)(t- \alpha_8)(t- \alpha_9) \in \mathbb{Q}_p [t] \\
v_p (\alpha_1) = n, v_p (\alpha_{10}) = 0 ,\\
 v_p (\alpha_2) =v_p (\alpha_3)=v_p (\alpha_4)=v_p (\alpha_5)= 3n/4, \\
 v_p (\alpha_6) =v_p (\alpha_7)=v_p (\alpha_8)=v_p (\alpha_9)= n/4.
\end{gather*}
It is obvious that $v_p (\alpha_1) /n , v_p (\alpha_{10} ) / n \in \mathbb{Z}$. The condition (**) holds if and only if $f(t)$ has no root of valuation $n/4$ or $3n/4$ nor an irreducible factor of degree 2 in $\mathbb{Q}_p$.

\begin{figure}
  \begin{center}
    \begin{tabular}{c}   
         \begin{tikzpicture}
   \draw [thick, -stealth](-0.5,0)--(10.5,0) node [anchor=north]{};
   \draw [thick, -stealth](0,-0.5)--(0,5.5) node{};
   \node [anchor=north west] at (0,0) {O};

   \draw [very thick, domain=0:1, samples=200] plot(\x, -\x+5);
   \draw [very thick, domain=1:5, samples=200] plot(\x, -3/4*\x+19/4);
   \draw [very thick, domain=5:9, samples=200] plot(\x, -\x/4 +9/4);
   \draw [very thick, domain=9:10, samples=200] plot(\x, 0);

   \draw [dashed](1,0) node [anchor=north]{$1$}--(1,1)--(1,5);
   \draw [dashed](2,0) node [anchor=north]{$2$}--(2,1)--(2,5);
   \draw [dashed](3,0) node [anchor=north]{$3$}--(3,1)--(3,5);
   \draw [dashed](4,0) node [anchor=north]{$4$}--(4,1)--(4,5);
   \draw [dashed](5,0) node [anchor=north]{$5$}--(5,1)--(5,5);
   \draw [dashed](6,0) node [anchor=north]{$6$}--(6,1)--(6,5);
   \draw [dashed](7,0) node [anchor=north]{$7$}--(7,1)--(7,5);
   \draw [dashed](8,0) node [anchor=north]{$8$}--(8,1)--(8,5);
   \draw [dashed](9,0) node [anchor=north]{$9$}--(9,1)--(9,5);
   \draw [dashed](10,0) node [anchor=north]{$10$}--(10,1)--(10,5);
   \draw [dashed](0,1) node [anchor=east]{$n$}--(1,1)--(10,1);
   \draw [dashed](0,2) node [anchor=east]{$2n$}--(1,2)--(10,2);
   \draw [dashed](0,3) node [anchor=east]{$3n$}--(1,3)--(10,3);
   \draw [dashed](0,4) node [anchor=east]{$4n$}--(1,4)--(10,4);
   \draw [dashed](0,5) node [anchor=east]{$5n$}--(1,5)--(10,5);
   \draw [dashed](0,5) node [anchor=east]{}--(0,5)--(10,0);
\end{tikzpicture}
          \end{tabular}
    \caption{$(1,4n+v_p (a_1))$ and $(5,v_p (a_5))$ are vertices.}
  \end{center}
\end{figure}

\end{case}
\begin{case}
{\bf \boldmath $(2,3n+v_p (a_2))$ is a sole vertex:}
\upshape

In this case, $\mathcal{NP}(f)$ is as in Figure 4. This occurs if and only if $v_p (a_1) \geq 0, v_p (a_2) =0, v_p (a_3) \geq n/2, v_p (a_4) \geq n$ and $v_p (a_5) \geq 3n/2$. Note that $v_p (a_1) \geq 0$ is a trivial condition. Then we can decompose $f(t)$ as $\displaystyle \prod_{i=1}^{10} (t - \alpha_i)$ in $\overline{\mathbb{Q}}_p [t]$ so that
\begin{gather*} 
(t- \alpha_1)(t- \alpha_2), (t- \alpha_9)(t- \alpha_{10})\in \mathbb{Q}_p [t], \\
(t - \alpha_3)(t - \alpha_4)(t - \alpha_5)(t - \alpha_6)(t - \alpha_7)(t - \alpha_8)  \in \mathbb{Q}_p [t], \\
v_p (\alpha_1) =v_p (\alpha_2)= n, \ v_p (\alpha_9)=v_p (\alpha_{10}) = 0 ,\\
 v_p (\alpha_3) =v_p (\alpha_4)=v_p (\alpha_5)=v_p (\alpha_6)= v_p (\alpha_7)= v_p (\alpha_8) =  n/2.
\end{gather*}
Since $v_p (\alpha_1) /n, v_p (\alpha_2) /n, v_p (\alpha_9) /n, v_p (\alpha_{10}) /n$ are integers, the condition (**) holds if and only if $f(t)$ has no root of valuation $n/2$ nor an irreducible factor of degree 3 in $\mathbb{Q}_p$.

\begin{figure}
  \begin{center}
    \begin{tabular}{c}   
         \begin{tikzpicture}
   \draw [thick, -stealth](-0.5,0)--(10.5,0) node [anchor=north]{};
   \draw [thick, -stealth](0,-0.5)--(0,5.5) node{};
   \node [anchor=north west] at (0,0) {O};

   \draw [very thick, domain=0:2, samples=200] plot(\x, -\x+5);
   \draw [very thick, domain=2:8, samples=200] plot(\x, -\x/2+4);
   \draw [very thick, domain=8:10, samples=200] plot(\x, 0);

   \draw [dashed](1,0) node [anchor=north]{$1$}--(1,1)--(1,5);
   \draw [dashed](2,0) node [anchor=north]{$2$}--(2,1)--(2,5);
   \draw [dashed](3,0) node [anchor=north]{$3$}--(3,1)--(3,5);
   \draw [dashed](4,0) node [anchor=north]{$4$}--(4,1)--(4,5);
   \draw [dashed](5,0) node [anchor=north]{$5$}--(5,1)--(5,5);
   \draw [dashed](6,0) node [anchor=north]{$6$}--(6,1)--(6,5);
   \draw [dashed](7,0) node [anchor=north]{$7$}--(7,1)--(7,5);
   \draw [dashed](8,0) node [anchor=north]{$8$}--(8,1)--(8,5);
   \draw [dashed](9,0) node [anchor=north]{$9$}--(9,1)--(9,5);
   \draw [dashed](10,0) node [anchor=north]{$10$}--(10,1)--(10,5);
   \draw [dashed](0,1) node [anchor=east]{$n$}--(1,1)--(10,1);
   \draw [dashed](0,2) node [anchor=east]{$2n$}--(1,2)--(10,2);
   \draw [dashed](0,3) node [anchor=east]{$3n$}--(1,3)--(10,3);
   \draw [dashed](0,4) node [anchor=east]{$4n$}--(1,4)--(10,4);
   \draw [dashed](0,5) node [anchor=east]{$5n$}--(1,5)--(10,5);
   \draw [dashed](0,5) node [anchor=east]{}--(0,5)--(10,0);
   
\end{tikzpicture}
          \end{tabular}
    \caption{$(2,3n+v_p (a_2))$ is a vertex.}
  \end{center}
\end{figure}

\end{case}

\begin{case}
{\bf \boldmath $(2,3n+v_p (a_2))$ and \boldmath $(5,v_p (a_5))$ are vertices:}
\upshape

In this case, $\mathcal{NP}(f)$ is as in Figure 5. This occurs if and only if $v_p (a_1)\geq0, v_p (a_2) =0, v_p (a_3) \geq n/3, v_p (a_4) \geq 2n/3$ and $v_p (a_5) = n$. Note that $v_p (a_1)\geq 0$ is a trivial condition. Then we can decompose $f(t)$ as $\displaystyle \prod_{i=1}^{10} (t - \alpha_i) \in \mathbb{Q}_p [t]$ so that
\begin{gather*} 
(t- \alpha_1)(t- \alpha_2), (t- \alpha_9)(t- \alpha_{10})\in \mathbb{Q}_p [t], \\
(t - \alpha_3)(t - \alpha_4)(t - \alpha_5), (t - \alpha_6)(t - \alpha_7)(t - \alpha_8)  \in \mathbb{Q}_p [t], \\
v_p (\alpha_1) =v_p (\alpha_2)= n, \ v_p (\alpha_9)=v_p (\alpha_{10}) = 0 ,\\
 v_p (\alpha_3) =v_p (\alpha_4)=v_p (\alpha_5)=2n/3, v_p (\alpha_6)= v_p (\alpha_7)= v_p (\alpha_8) =  n/3.
\end{gather*}
Thus the condition (**) holds if and only if $f(t)$ has no root of valuation $n/3$ or $2n/3$ in $\mathbb{Q}_p$.

\begin{figure}
  \begin{center}
    \begin{tabular}{c}   
         \begin{tikzpicture}
   \draw [thick, -stealth](-0.5,0)--(10.5,0) node [anchor=north]{};
   \draw [thick, -stealth](0,-0.5)--(0,5.5) node{};
   \node [anchor=north west] at (0,0) {O};

   \draw [very thick, domain=0:2, samples=200] plot(\x, -\x+5);
   \draw [very thick, domain=2:5, samples=200] plot(\x, -2/3*\x+13/3);
   \draw [very thick, domain=5:8, samples=200] plot(\x, -1/3*\x+8/3);
   \draw [very thick, domain=8:10, samples=200] plot(\x, 0);

   \draw [dashed](1,0) node [anchor=north]{$1$}--(1,1)--(1,5);
   \draw [dashed](2,0) node [anchor=north]{$2$}--(2,1)--(2,5);
   \draw [dashed](3,0) node [anchor=north]{$3$}--(3,1)--(3,5);
   \draw [dashed](4,0) node [anchor=north]{$4$}--(4,1)--(4,5);
   \draw [dashed](5,0) node [anchor=north]{$5$}--(5,1)--(5,5);
   \draw [dashed](6,0) node [anchor=north]{$6$}--(6,1)--(6,5);
   \draw [dashed](7,0) node [anchor=north]{$7$}--(7,1)--(7,5);
   \draw [dashed](8,0) node [anchor=north]{$8$}--(8,1)--(8,5);
   \draw [dashed](9,0) node [anchor=north]{$9$}--(9,1)--(9,5);
   \draw [dashed](10,0) node [anchor=north]{$10$}--(10,1)--(10,5);
   \draw [dashed](0,1) node [anchor=east]{$n$}--(1,1)--(10,1);
   \draw [dashed](0,2) node [anchor=east]{$2n$}--(1,2)--(10,2);
   \draw [dashed](0,3) node [anchor=east]{$3n$}--(1,3)--(10,3);
   \draw [dashed](0,4) node [anchor=east]{$4n$}--(1,4)--(10,4);
   \draw [dashed](0,5) node [anchor=east]{$5n$}--(1,5)--(10,5);
   \draw [dashed](0,5) node [anchor=east]{}--(0,5)--(10,0);
   
\end{tikzpicture}
          \end{tabular}
    \caption{$(2,3n+v_p (a_2))$ and $(5,v_p (a_5))$ are vertices.}
  \end{center}
\end{figure}

\end{case}
\begin{case}
{\bf \boldmath $(3,2n+v_p (a_3))$ is a vertex:}
\upshape

In this case, there are two possible Newton polygons as in Figure 6. First we consider the lower polygon in Figure 6. This is $\mathcal{NP}(f)$ if and only if $v_p (a_1) \geq 0, v_p (a_2 ) \geq 0, v_p (a_3)=0, v_p (a_4) \geq n/2$ and $v_p (a_5) \geq n$. Note that $v_p (a_1) \geq 0, v_p (a_2 ) \geq 0$ are trivial conditions. Then we can decompose $f(t)$ as $\displaystyle \prod_{i=1}^{10} (t - \alpha_i)$ in $\overline{\mathbb{Q}}_p [t]$ so that
\begin{gather*} 
(t- \alpha_1)(t- \alpha_2)(t-\alpha_3), (t-\alpha_8)(t- \alpha_9)(t- \alpha_{10})\in \mathbb{Q}_p [t], \\
(t - \alpha_4)(t - \alpha_5)(t - \alpha_6)(t - \alpha_7) \in \mathbb{Q}_p [t], \\
v_p (\alpha_1) =v_p (\alpha_2)=v_p (\alpha_3) = n, \  v_p (\alpha_8) =v_p (\alpha_9)=v_p (\alpha_{10}) = 0 ,\\
 v_p (\alpha_4)=v_p (\alpha_5)=v_p (\alpha_6)= v_p (\alpha_7)=  n/2.
\end{gather*}
Thus the condition (**) holds if and only if $f(t)$ has no root of valuation $n/2$ in $\mathbb{Q}_p$.

Second we consider the upper polygon in Figure 6. This is $\mathcal{NP}(f)$ if and only if $v_p (a_1) \geq n/3, v_p (a_2 ) \geq 2n/3, v_p (a_3)=n, v_p (a_4) \geq 3n/2$ and $v_p (a_5) \geq 2n$. Then we can decompose $f(t)$ as $\displaystyle \prod_{i=1}^{10} (t - \alpha_i)$ in $\overline{\mathbb{Q}}_p [t]$ so that
\begin{gather*} 
(t- \alpha_1)(t- \alpha_2)(t-\alpha_3), (t-\alpha_8)(t- \alpha_9)(t- \alpha_{10})\in \mathbb{Q}_p [t], \\
(t - \alpha_4)(t - \alpha_5)(t - \alpha_6)(t - \alpha_7) \in \mathbb{Q}_p [t], \\
v_p (\alpha_1) =v_p (\alpha_2)=v_p (\alpha_3) = 2n/3, \  v_p (\alpha_8) =v_p (\alpha_9)=v_p (\alpha_{10}) = n/3 ,\\
 v_p (\alpha_4)=v_p (\alpha_5)=v_p (\alpha_6)= v_p (\alpha_7)=  n/2.
\end{gather*}
Thus the condition (**) holds if and only if $f(t)$ has no root of valuation $n/3$, $n/2$ or $2n/3$ in $\mathbb{Q}_p$.

\begin{figure}
  \begin{center}
    \begin{tabular}{c}   
         \begin{tikzpicture}
   \draw [thick, -stealth](-0.5,0)--(10.5,0) node [anchor=north]{};
   \draw [thick, -stealth](0,-0.5)--(0,5.5) node{};
   \node [anchor=north west] at (0,0) {O};

   \draw [very thick, domain=0:3, samples=200] plot(\x, -2/3* \x+5);
   \draw [very thick, domain=3:7, samples=200] plot(\x, -\x/2+9/2);
   \draw [very thick, domain=7:10, samples=200] plot(\x, -\x/3 + 10/3);
   
   \draw [very thick, domain=0:3, samples=200] plot(\x, -\x+5);
   \draw [very thick, domain=3:7, samples=200] plot(\x, -\x/2+7/2);
   \draw [very thick, domain=7:10, samples=200] plot(\x, 0);

   \draw [dashed](1,0) node [anchor=north]{$1$}--(1,1)--(1,5);
   \draw [dashed](2,0) node [anchor=north]{$2$}--(2,1)--(2,5);
   \draw [dashed](3,0) node [anchor=north]{$3$}--(3,1)--(3,5);
   \draw [dashed](4,0) node [anchor=north]{$4$}--(4,1)--(4,5);
   \draw [dashed](5,0) node [anchor=north]{$5$}--(5,1)--(5,5);
   \draw [dashed](6,0) node [anchor=north]{$6$}--(6,1)--(6,5);
   \draw [dashed](7,0) node [anchor=north]{$7$}--(7,1)--(7,5);
   \draw [dashed](8,0) node [anchor=north]{$8$}--(8,1)--(8,5);
   \draw [dashed](9,0) node [anchor=north]{$9$}--(9,1)--(9,5);
   \draw [dashed](10,0) node [anchor=north]{$10$}--(10,1)--(10,5);
   \draw [dashed](0,1) node [anchor=east]{$n$}--(1,1)--(10,1);
   \draw [dashed](0,2) node [anchor=east]{$2n$}--(1,2)--(10,2);
   \draw [dashed](0,3) node [anchor=east]{$3n$}--(1,3)--(10,3);
   \draw [dashed](0,4) node [anchor=east]{$4n$}--(1,4)--(10,4);
   \draw [dashed](0,5) node [anchor=east]{$5n$}--(1,5)--(10,5);
   \draw [dashed](0,5) node [anchor=east]{}--(0,5)--(10,0);
   
\end{tikzpicture}
          \end{tabular}
    \caption{$(3,2n+v_p (a_3))$ is a vertex.}
  \end{center}
\end{figure}

\end{case}
\begin{case}
{\bf \boldmath $(4,n+v_p (a_4))$ is a vertex:}
\upshape

In this case, there are two possible Newton polygons as in Figure 7. First we consider the lower polygon in Figure 7. This is $\mathcal{NP}(f)$ if and only if $v_p (a_1) \geq 0, v_p (a_2 ) \geq 0, v_p (a_3)\geq0, v_p (a_4)=0$ and $v_p (a_5) \geq n/2$. Note that $v_p (a_1) \geq 0, v_p (a_2 ) \geq 0, v_p (a_3) \geq 0$ are trivial conditions. Then we can decompose $f(t)$ as $\displaystyle \prod_{i=1}^{10} (t - \alpha_i)$ in $\overline{\mathbb{Q}}_p [t]$ so that
\begin{gather*} 
(t- \alpha_1)\cdots(t-\alpha_4), (t-\alpha_7)\cdots(t- \alpha_{10})\in \mathbb{Q}_p [t], \\
(t - \alpha_5)(t - \alpha_6) \in \mathbb{Q}_p [t], \\
v_p (\alpha_1) =\cdots=v_p (\alpha_4) = n, \  v_p (\alpha_7) =\cdots=v_p (\alpha_{10}) = 0 ,\\
v_p (\alpha_5)=v_p (\alpha_6)= n/2.
\end{gather*}
Thus the condition (**) holds if and only if $f(t)$ has no root of valuation $n/2$ in $\mathbb{Q}_p$.

Second we consider the upper polygon in Figure 7. This is $\mathcal{NP}(f)$ if and only if $v_p (a_1) \geq n/4, v_p (a_2 ) \geq n/2, v_p (a_3)\geq 3n/4, v_p (a_4)=n$ and $v_p (a_5) \geq 3n/2$. Then we can decompose $f(t)$ as $\displaystyle \prod_{i=1}^{10} (t - \alpha_i)$ in $\overline{\mathbb{Q}}_p [t]$ so that
\begin{gather*} 
(t- \alpha_1)\cdots(t-\alpha_4), (t-\alpha_7)\cdots(t- \alpha_{10})\in \mathbb{Q}_p [t], \\
(t - \alpha_5)(t - \alpha_6) \in \mathbb{Q}_p [t], \\
v_p (\alpha_1) =\cdots=v_p (\alpha_4) = 3n/4, \  v_p (\alpha_7) =\cdots=v_p (\alpha_{10}) = n/4 ,\\
v_p (\alpha_5)=v_p (\alpha_6)= n/2.
\end{gather*}
Thus the condition (**) holds if and only if $f(t)$ has no root of valuation $n/4, n/2$ or $3n/4$ and has two irreducible factors of degree $4$ in $\mathbb{Q}_p$.

\begin{figure}
  \begin{center}
    \begin{tabular}{c}   
         \begin{tikzpicture}
   \draw [thick, -stealth](-0.5,0)--(10.5,0) node [anchor=north]{};
   \draw [thick, -stealth](0,-0.5)--(0,5.5) node{};
   \node [anchor=north west] at (0,0) {O};

   \draw [very thick, domain=0:4, samples=200] plot(\x, -3/4* \x+5);
   \draw [very thick, domain=4:6, samples=200] plot(\x, -\x/2+4);
   \draw [very thick, domain=6:10, samples=200] plot(\x, -\x/4 + 5/2);
   
   \draw [very thick, domain=0:4, samples=200] plot(\x, -\x+5);
   \draw [very thick, domain=4:6, samples=200] plot(\x, -\x/2+3);
   \draw [very thick, domain=6:10, samples=200] plot(\x, 0);

   \draw [dashed](1,0) node [anchor=north]{$1$}--(1,1)--(1,5);
   \draw [dashed](2,0) node [anchor=north]{$2$}--(2,1)--(2,5);
   \draw [dashed](3,0) node [anchor=north]{$3$}--(3,1)--(3,5);
   \draw [dashed](4,0) node [anchor=north]{$4$}--(4,1)--(4,5);
   \draw [dashed](5,0) node [anchor=north]{$5$}--(5,1)--(5,5);
   \draw [dashed](6,0) node [anchor=north]{$6$}--(6,1)--(6,5);
   \draw [dashed](7,0) node [anchor=north]{$7$}--(7,1)--(7,5);
   \draw [dashed](8,0) node [anchor=north]{$8$}--(8,1)--(8,5);
   \draw [dashed](9,0) node [anchor=north]{$9$}--(9,1)--(9,5);
   \draw [dashed](10,0) node [anchor=north]{$10$}--(10,1)--(10,5);
   \draw [dashed](0,1) node [anchor=east]{$n$}--(1,1)--(10,1);
   \draw [dashed](0,2) node [anchor=east]{$2n$}--(1,2)--(10,2);
   \draw [dashed](0,3) node [anchor=east]{$3n$}--(1,3)--(10,3);
   \draw [dashed](0,4) node [anchor=east]{$4n$}--(1,4)--(10,4);
   \draw [dashed](0,5) node [anchor=east]{$5n$}--(1,5)--(10,5);
   \draw [dashed](0,5) node [anchor=east]{}--(0,5)--(10,0);
   
\end{tikzpicture}
          \end{tabular}
    \caption{$(4,n+v_p (a_4))$ is a vertex.}
  \end{center}
\end{figure}

\end{case}
\begin{case}
{\bf \boldmath $(5, v_p (a_5))$ is a vertex:}
\upshape

In this case, there are three possible Newton polygons as in Figure 8. First we consider the bottom polygon in Figure 8. This is $\mathcal{NP}(f)$ if and only if $v_p (a_1) \geq 0, v_p (a_2 ) \geq 0, v_p (a_3)\geq0, v_p (a_4)\geq0$ and $v_p (a_5) =0$. Note that $v_p (a_1) \geq 0, v_p (a_2 ) \geq 0, v_p (a_3) \geq 0, v_p (a_4) \geq 0$ are trivial conditions. Then we can decompose $f(t)$ as $\displaystyle \prod_{i=1}^{10} (t - \alpha_i)$ in $\overline{\mathbb{Q}}_p [t]$ so that
\begin{gather*} 
(t- \alpha_1)\cdots(t-\alpha_5), (t-\alpha_6)\cdots(t- \alpha_{10})\in \mathbb{Q}_p [t], \\
v_p (\alpha_1) =\cdots=v_p (\alpha_5) = n, \  v_p (\alpha_7) =\cdots=v_p (\alpha_{10}) = 0 ,
\end{gather*}
The condition (**) always holds.

Second we consider the middle polygon in Figure 8. This is $\mathcal{NP}(f)$ if and only if $v_p (a_1) \geq n/5, v_p (a_2 ) \geq 2n/5, v_p (a_3)\geq 3n/5, v_p (a_4)\geq 4n/5$ and $v_p (a_5) =n$. Then we can decompose $f(t)$ as $\displaystyle \prod_{i=1}^{10} (t - \alpha_i)$ in $\overline{\mathbb{Q}}_p [t]$ so that
\begin{gather*} 
(t- \alpha_1)\cdots(t-\alpha_5), (t-\alpha_6)\cdots(t- \alpha_{10})\in \mathbb{Q}_p [t], \\
v_p (\alpha_1) =\cdots=v_p (\alpha_5) = 4n/5, \  v_p (\alpha_6) =\cdots=v_p (\alpha_{10}) = n/5.
\end{gather*}
Thus the condition (**) holds if and only if $f(t)$ has two irreducible factors of degree $5$ in $\mathbb{Q}_p$.

Third we consider the top polygon in Figure 8. This is $\mathcal{NP}(f)$ if and only if $v_p (a_1) \geq 2n/5, v_p (a_2 ) \geq 4n/5, v_p (a_3)\geq 6n/5, v_p (a_4)\geq 8n/5$ and $v_p (a_5) =2n$. Then we can decompose $f(t)$ as $\displaystyle \prod_{i=1}^{10} (t - \alpha_i)$ in $\overline{\mathbb{Q}}_p [t]$ so that
\begin{gather*} 
(t- \alpha_1)\cdots(t-\alpha_5), (t-\alpha_6)\cdots(t- \alpha_{10})\in \mathbb{Q}_p [t], \\
v_p (\alpha_1) =\cdots=v_p (\alpha_5) = 3n/5, \  v_p (\alpha_6) =\cdots=v_p (\alpha_{10}) = 2n/5.
\end{gather*}
Thus the condition (**) holds if and only if $f(t)$ has two irreducible factors of degree $5$ in $\mathbb{Q}_p$.

\begin{figure}
  \begin{center}
    \begin{tabular}{c}   
         \begin{tikzpicture}
   \draw [thick, -stealth](-0.5,0)--(10.5,0) node [anchor=north]{};
   \draw [thick, -stealth](0,-0.5)--(0,5.5) node{};
   \node [anchor=north west] at (0,0) {O};

   \draw [very thick, domain=0:5, samples=200] plot(\x, -\x+5);
   \draw [very thick, domain=5:10, samples=200] plot(\x, 0);
   
   \draw [very thick, domain=0:5, samples=200] plot(\x, -4/5*\x+5);
   \draw [very thick, domain=5:10, samples=200] plot(\x, -\x/5 + 2);
   
   \draw [very thick, domain=0:5, samples=200] plot(\x, -3/5*\x+5);
   \draw [very thick, domain=5:10, samples=200] plot(\x, -2/5*\x + 4);

   \draw [dashed](1,0) node [anchor=north]{$1$}--(1,1)--(1,5);
   \draw [dashed](2,0) node [anchor=north]{$2$}--(2,1)--(2,5);
   \draw [dashed](3,0) node [anchor=north]{$3$}--(3,1)--(3,5);
   \draw [dashed](4,0) node [anchor=north]{$4$}--(4,1)--(4,5);
   \draw [dashed](5,0) node [anchor=north]{$5$}--(5,1)--(5,5);
   \draw [dashed](6,0) node [anchor=north]{$6$}--(6,1)--(6,5);
   \draw [dashed](7,0) node [anchor=north]{$7$}--(7,1)--(7,5);
   \draw [dashed](8,0) node [anchor=north]{$8$}--(8,1)--(8,5);
   \draw [dashed](9,0) node [anchor=north]{$9$}--(9,1)--(9,5);
   \draw [dashed](10,0) node [anchor=north]{$10$}--(10,1)--(10,5);
   \draw [dashed](0,1) node [anchor=east]{$n$}--(1,1)--(10,1);
   \draw [dashed](0,2) node [anchor=east]{$2n$}--(1,2)--(10,2);
   \draw [dashed](0,3) node [anchor=east]{$3n$}--(1,3)--(10,3);
   \draw [dashed](0,4) node [anchor=east]{$4n$}--(1,4)--(10,4);
   \draw [dashed](0,5) node [anchor=east]{$5n$}--(1,5)--(10,5);
   \draw [dashed](0,5) node [anchor=east]{}--(0,5)--(10,0);
   
\end{tikzpicture}
          \end{tabular}
    \caption{$(5, v_p (a_5))$ is a vertex.}
  \end{center}
\end{figure}

\end{case}
\begin{case}
{\bf \boldmath $\mathcal{NP}(f)$ is a line:}
\upshape

In this case, $\mathcal{NP}(f)$ is as in Figure 9. This occurs if and only if $v_p (a_1)\geq n/2, v_p (a_2) \geq n, v_p (a_3) \geq 3n/2, v_p (a_4) \geq 2n$ and $v_p (a_5) \geq 5n/2$. Then we can decompose $f(t)$ as $\displaystyle \prod_{i=1}^{10} (t - \alpha_i)$ in $\overline{\mathbb{Q}}_p [t]$ so that
\begin{gather*} 
(t- \alpha_1)\cdots(t- \alpha_{10})\in \mathbb{Q}_p [t], \\
v_p (\alpha_1) =\cdots = v_p (\alpha_{10}) = n/2.
\end{gather*}
Thus the condition (**) holds if and only if $f(t)$ has no root of valuation $n/2$ nor a factor of degree $3$ or $5$ in $\mathbb{Q}_p$.

\begin{figure}
  \begin{center}
    \begin{tabular}{c}   
         \begin{tikzpicture}
   \draw [thick, -stealth](-0.5,0)--(10.5,0) node [anchor=north]{};
   \draw [thick, -stealth](0,-0.5)--(0,5.5) node{};
   \node [anchor=north west] at (0,0) {O};

   \draw [very thick, domain=0:10, samples=200] plot(\x, -1/2*\x+5);

   \draw [dashed](1,0) node [anchor=north]{$1$}--(1,1)--(1,5);
   \draw [dashed](2,0) node [anchor=north]{$2$}--(2,1)--(2,5);
   \draw [dashed](3,0) node [anchor=north]{$3$}--(3,1)--(3,5);
   \draw [dashed](4,0) node [anchor=north]{$4$}--(4,1)--(4,5);
   \draw [dashed](5,0) node [anchor=north]{$5$}--(5,1)--(5,5);
   \draw [dashed](6,0) node [anchor=north]{$6$}--(6,1)--(6,5);
   \draw [dashed](7,0) node [anchor=north]{$7$}--(7,1)--(7,5);
   \draw [dashed](8,0) node [anchor=north]{$8$}--(8,1)--(8,5);
   \draw [dashed](9,0) node [anchor=north]{$9$}--(9,1)--(9,5);
   \draw [dashed](10,0) node [anchor=north]{$10$}--(10,1)--(10,5);
   \draw [dashed](0,1) node [anchor=east]{$n$}--(1,1)--(10,1);
   \draw [dashed](0,2) node [anchor=east]{$2n$}--(1,2)--(10,2);
   \draw [dashed](0,3) node [anchor=east]{$3n$}--(1,3)--(10,3);
   \draw [dashed](0,4) node [anchor=east]{$4n$}--(1,4)--(10,4);
   \draw [dashed](0,5) node [anchor=east]{$5n$}--(1,5)--(10,5);

\end{tikzpicture}
          \end{tabular}
    \caption{$\mathcal{NP}(f)$ is a line.}
  \end{center}
\end{figure}

\end{case}
This completes the proof of Theorem \ref{theo1.3}.
\end{proof}

\section*{Acknowledgement}
I would like to appreciate Prof. Akio Tamagawa whose comments and suggestions were of inestimable value for my study. I also would like to appreciate Prof. Akihiko Yukie for his many valuable opinions and advices on the presentation of this paper.

 
 
\end{document}